\newtheorem{theorem}{Theorem}[section]
\newtheorem*{theorem:repeat}{\tref{butterflystab}}
\newtheorem*{theorem:repeatmain}{\tref{main}}
\newtheorem{lemma}[theorem]{Lemma}
\newtheorem{corollary}[theorem]{Corollary}
\newtheorem{prop}[theorem]{Proposition}
\newtheorem{conjecture}[theorem]{Conjecture}
\newcommand\lref[1]{Lemma~\ref{lem:#1}}
\newcommand\tref[1]{Theorem~\ref{thm:#1}}
\newcommand\cref[1]{Corollary~\ref{cor:#1}}
\newcommand\sref[1]{Section~\ref{sec:#1}}
\newcommand\cjref[1]{Conjecture~\ref{conj:#1}}
\newcommand\bC{{\mathbf C}}
\newcommand\cA{{\mathcal A}}
\newcommand\cC{{\mathcal C}}
\newcommand\cF{{\mathcal F}}
\newcommand\cG{{\mathcal G}}
\newcommand\cP{{\mathcal P}}
\newcommand\cS{{\mathcal S}}
\begin{document}

\title{Induced and non-induced forbidden subposet problems}

\author{Bal\'azs Patk\'os\thanks{Research supported by the J\'anos Bolyai Research Scholarship of the Hungarian Academy of Sciences.} \\
\small MTA--ELTE Geometric and Algebraic Combinatorics Research Group, \\
\small H--1117 Budapest, P\'azm\'any P.\ s\'et\'any 1/C, Hungary \\
\small Alfr\'ed R\'enyi Institute of Mathematics, Hungarian Academy of Sciences, Hungary. \\
\small  \tt  patkosb@cs.elte.hu and patkos@renyi.hu.}

\maketitle

\begin{abstract}
The problem of determining the maximum size $La(n,P)$ that a $P$-free subposet of the Boolean lattice $B_n$ can have, attracted the attention of many researchers, but little is known about the induced version of these problems. In this paper we determine the asymptotic behavior of $La^*(n,P)$, the maximum size that an induced $P$-free subposet of the Boolean lattice $B_n$ can have for the case when $P$ is the complete two-level poset $K_{r,t}$ or the complete multi-level poset $K_{r,s_1,\dots,s_j,t}$ when all $s_i$'s either equal 4 or are large enough and satisfy an extra condition.
We also show lower and upper bounds for the non-induced problem in the case when $P$ is the complete three-level poset $K_{r,s,t}$. These bounds determine the asymptotics of $La(n,K_{r,s,t})$ for some values of $s$ independently of the values of $r$ and $t$.
\end{abstract}

\section{Introduction}
We use standard notation: $2^X$ denotes the power set of $X$, $\binom{X}{k}$ denotes the set of $k$-element subsets of $X$, for two sets $A\subset B$ the interval $\{G: A\subseteq G\subseteq B\}$ is denoted by $[A,B]$ and $[n]$ stands for the set of the first $n$ positive integers $\{1,2,\dots,n\}$. The complement $[n]\setminus A$ of a subset $A$ of $[n]$ will be denoted by $\overline{A}$ and for a family $\cF\subseteq 2^{[n]}$ of sets we will write $\overline{\cF}=\{\overline{F}:F\in\cF\}$.

\vskip0.2truecm

The very first theorem in extremal finite set theory is due to Sperner \cite{S} and it states that if $\cF \subseteq 2^{[n]}$ is a family of sets that does not contain two sets $F_1,F_2$ with $F_1 \subsetneq F_2$, then $|\cF| \le \binom{n}{\lfloor \frac{n}{2}\rfloor}$ holds. Such families are called \textit{antichains} or \textit{Sperner families}. A first generalization is due to Erd\H os \cite{E}, who proved that if $\cF$ does not contain any $(k+1)$-chains, i.e., $k+1$ sets $F_1,F_2, \dots, F_{k+1}$ with $F_1 \subsetneq F_2 \subsetneq \dots \subsetneq F_{k+1}$, then $|\cF|\le \Sigma(n,k):=\sum_{i=1}^k\binom{n}{\lfloor\frac{n-k}{2}\rfloor+i}$ holds. Such families are called \textit{$k$-Sperner families}.

These two theorems have many applications and generalizations. One such generalization is the topic of forbidden subposet problems first introduced by Katona and Tarj\'an \cite{KT}. We say that a poset $Q$ \textit{contains} another poset $P$ if there is an injection $i:P\rightarrow Q$ such that for every $p_1,p_2\in P$ the fact $p_1 \le p_2$ implies $i(p_1) \le i(p_2)$. If $Q$ does not contain $P$, then it is said to be \textit{$P$-free}. If $\cP$ is a set of posets, then $Q$ is $\cP$-free if it is $P$-free for all $P\in\cP$. The parameter introduced by Katona and Tarj\'an is the quantity $La(n,P)$ that denotes the maximum size of a $P$-free subposet of $B_n$, the Boolean poset of all subsets of $[n]$ ordered by inclusion. With this notation Erd\H os's theorem states that $La(n,P_{k+1})=\Sigma(n,k)$, where $P_{k+1}$ denotes the path on $k+1$ elements, i.e., a total ordering on $k+1$ elements.

In the same paper, Katona and Tarj\'an introduced the induced version of the problem. We say that $Q$ \textit{contains an induced copy of $P$} if there is an injection $i:P\rightarrow Q$ such that for any $p_1,p_2 \in P$ we have $p_1 \le p_2$ if and only if $i(p_1) \le i(p_2)$. If $Q$ does not contain an induced copy of $P$, then $Q$ is said to be \textit{induced $P$-free}. The analogous extremal number is denoted by $La^*(n,P)$ and obviously the inequality $La(n,P) \le La^*(n,P)$ holds for any poset $P$. The notation for multiple forbidden subposets is $La(n,\cP)$ and $La^*(n,\cP)$.

As any poset $P$ is contained in $P_{|P|}$, we clearly have $La(n,P)\le La(n,P_{|P|})=\Sigma(n,|P|-1).$ Strengthenings of this general bound were obtained by Burcsi and Nagy \cite{BN_doublechain}, Chen and Li \cite{CL_general} and recently by Gr\'osz, Methuku and Tompkins \cite{GMT}. Therefore it is natural to compare $La(n,P)$ to $\binom{n}{\lfloor\frac{n}{2}\rfloor}$. Unfortunately, it is not known whether $\pi(P)=\lim_{n\rightarrow \infty}\frac{La(n,P)}{\binom{n}{\lfloor\frac{n}{2}\rfloor}}$ exists. The following conjecture was first stated in \cite{GL_general}.

\begin{conjecture}
\label{conj:big} For any poset $P$ let $e(P)$ denote the largest integer $k$ such that for any $j$ and $n$ the family $\cup_{i=1}^k\binom{[n]}{j+i}$ is $P$-free. Then $\pi(P)$ exists and is equal to $e(P)$.
\end{conjecture}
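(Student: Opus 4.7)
My plan is to establish the conjecture as the conjunction of a lower bound $\pi(P)\geq e(P)$, obtained by the explicit construction, and a matching upper bound $\pi(P)\leq e(P)$ obtained through a refinement of the standard chain double-counting argument. Writing $k=e(P)$ and $j_n=\lfloor(n-k)/2\rfloor$, for the lower bound I take $\cF_n=\bigcup_{i=1}^{k}\binom{[n]}{j_n+i}$, the $k$ most central levels of $B_n$; by the very definition of $e(P)$ this family is $P$-free, and a standard binomial estimate gives $|\cF_n|=(k+o(1))\binom{n}{\lfloor n/2\rfloor}$, so $\liminf_n La(n,P)/\binom{n}{\lfloor n/2\rfloor}\geq e(P)$. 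All the difficulty sits in the matching upper bound.

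For the upper bound the natural starting point is the LYM/chain double count: for a uniformly random maximal chain $C$ of $B_n$, $\mathbb{E}[|\cF\cap C|]=\sum_{F\in\cF}1/\binom{n}{|F|}\geq|\cF|/\binom{n}{\lfloor n/2\rfloor}$. Since $\cF\cap C$ is itself a chain inside $\cF$ and every chain of length $|P|$ contains $P$ as a subposet, any $P$-free $\cF$ satisfies $|\cF\cap C|\leq|P|-1$, giving only $\pi(P)\leq|P|-1$. This matches the trivial $La(n,P)\leq La(n,P_{|P|})$ and is strictly weaker than the conjecture whenever $P$ is not a chain. The plan is therefore to replace single chains by richer test structures capable of detecting $P$ rather than merely $P_{|P|}$, and to show that on each such structure $P$-freeness forces a per-structure bound of $e(P)$.

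Concretely I would attach to each $F\in\cF$ a local profile encoding, for each level $|F|+r$ with $-|P|\leq r\leq|P|$, how many members of $\cF$ lying in $[F,[n]]$ or in $[\emptyset,F]$ occupy that level, together with coarse comparability data among those members. Avoidance of $P$ translates into a list of forbidden profile sequences along a chain, and I would try to show by a top-down greedy embedding argument that once the LYM-weighted density of realized profiles exceeds $e(P)$ a copy of $P$ can be assembled one vertex at a time. The main obstacle, and presumably the reason the conjecture has resisted proof in full generality, lies precisely in this greedy embedding step: for structured posets such as the complete multi-level $K_{r,s_1,\dots,s_j,t}$ treated in this paper the recursive layout makes the assembly work, but for an arbitrary $P$ the non-comparabilities demanded by the $P$-copy must be negotiated against conflicting chain-intersections of $\cF$, and no uniform mechanism for doing so is known. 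Any serious attack on the full conjecture seems to require either a new combinatorial device at this step or an entropy/flag-algebra framework that bypasses explicit embedding altogether.
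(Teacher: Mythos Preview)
The statement you are attempting to prove is not proved in the paper: it is stated there as an open conjecture (the paper explicitly remarks just before it that even the existence of $\pi(P)$ is unknown, and only verifies the conjecture for special classes of $P$ such as trees and the complete multi-level posets treated later). So there is no ``paper's own proof'' to compare against.

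Your proposal is not a proof either, and you essentially say so yourself. The lower bound $\liminf_n La(n,P)/\binom{n}{\lfloor n/2\rfloor}\ge e(P)$ via the $e(P)$ middle levels is correct and standard. The upper bound, however, never gets past a sketch: you propose attaching to each $F\in\cF$ a ``local profile'' and then claim one would ``try to show by a top-down greedy embedding argument'' that density above $e(P)$ forces a copy of $P$, while in the same breath admitting that ``no uniform mechanism for doing so is known'' and that this is ``presumably the reason the conjecture has resisted proof in full generality.'' That is an honest assessment of the state of the art, but it means the argument has a genuine gap exactly at the decisive step. The chain/LYM framework you start from only sees chains, so it can detect $P_{|P|}$ and gives $\pi(P)\le |P|-1$; upgrading it to detect an arbitrary $P$ with the correct constant $e(P)$ is precisely the content of the conjecture, and your profile idea does not supply the missing combinatorial mechanism. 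In short: the lower bound is fine, the upper bound is a restatement of the open problem rather than a proof.
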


This conjecture has been verified for many classes of posets. The most remarkable result is due to Bukh.

\begin{theorem}
\label{thm:bukh} Let $T$ be a tree poset. Then $\Sigma(n,h(T)-1)\le La(n,T) \le (h(T)-1+O(\frac{1}{n}))\binom{n}{\lfloor \frac{n}{2}\rfloor}$ holds. In particular, $\pi(T)=e(T)$ holds for any tree poset $T$.
\end{theorem}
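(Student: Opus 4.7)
For the lower bound, take $\cF_0$ to be the union of $h(T)-1$ consecutive middle levels of $B_n$, say $\cF_0=\bigcup_{i=0}^{h(T)-2}\binom{[n]}{m+i}$ with $m=\lfloor(n-h(T)+2)/2\rfloor$. Then $|\cF_0|=\Sigma(n,h(T)-1)$, and since every chain in $\cF_0$ has at most $h(T)-1$ elements whereas $T$ contains a chain of length $h(T)$, the family $\cF_0$ is automatically $T$-free.

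For the upper bound I plan to use Lubell's averaging inequality. Writing $\lambda(\cF)=\sum_{F\in\cF}1/\binom{n}{|F|}$, which equals the expected value of $|\cF\cap C|$ over a uniformly random maximal chain $C$ of $B_n$, the bound $|\cF|\leq\lambda(\cF)\binom{n}{\lfloor n/2\rfloor}$ reduces the theorem to the assertion
\[
\lambda(\cF)\leq h(T)-1+O_T(1/n) \quad\text{for every $T$-free family $\cF\subseteq 2^{[n]}$.}
\]
I would prove this by induction on $|V(T)|$. Fix a leaf $v$ of $T$ with unique neighbor $u$ and let $T'=T-v$. Any $T$-free family is $T'$-free, so the inductive hypothesis gives $\lambda(\cF)\leq h(T')-1+O(1/n)$; when $h(T')=h(T)$, which happens whenever $v$ does not lie at an extremal level of $T$, this is already the target bound.

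The delicate case, and the main obstacle, is $h(T')=h(T)-1$, where the inductive bound is weaker by~$1$ than what is needed. To recover the missing unit I would argue that whenever a maximal chain $C$ satisfies $|\cF\cap C|\geq h(T)$, one can piece together an induced copy of $T$ inside $\cF$ by matching the on-chain sets to an induced embedding of $T'$ and then selecting a single off-chain witness for the leaf $v$, comparable to the image $U$ of $u$ in the direction prescribed by the edge $uv$ of $T$. Because $v$ is a leaf, this witness is subject to exactly one comparability constraint, and a counting argument (using that the sets of $2^{[n]}$ comparable to $U$ number $2^{|U|}+2^{n-|U|}-1$, far exceeding the $n+1$ elements of any single chain) shows that chains admitting no admissible witness form only a $\Theta(1/n)$ fraction of the chains through $U$. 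Summing over the $O(|V(T)|)$ images produced by the induction yields the $O_T(1/n)$ slack in the theorem. The tree hypothesis is indispensable precisely here: a poset containing a cycle would force the off-chain witness to satisfy several interlocking comparability constraints simultaneously, and the elementary single-constraint estimate breaks down, which is exactly why \cjref{big} remains open for general $P$.
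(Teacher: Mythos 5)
A preliminary remark: the paper does not prove this statement at all; it is quoted as Bukh's theorem, so there is no in-paper proof to compare against. Evaluating your proposal on its own terms: the lower bound is fine (a copy of $T$ needs a chain of $h(T)$ sets, which cannot fit in $h(T)-1$ consecutive levels), but the upper bound contains a fatal logical error at its very first step. The claim ``any $T$-free family is $T'$-free'' is backwards. Since $T'=T-v$ is a subposet of $T$, a family containing a copy of $T$ contains a copy of $T'$; hence $T'$-freeness implies $T$-freeness, not the converse. (Concretely, take $T=P_2$ and $T'=P_1$: every antichain is $P_2$-free yet contains $P_1$.) So you cannot apply the inductive hypothesis for $T'$ to a $T$-free family $\cF$, and the induction never gets off the ground --- not even in the case $h(T')=h(T)$ that you label as already done. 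In Bukh's actual argument the $T'$-freeness is \emph{earned}, not inherited: one partitions $\cF$ into a piece of small Lubell mass and a piece in which every hypothetical copy of $T'$ could be extended to a copy of $T$ (because the image of $u$ there has large up- or down-degree in $\cF$), so that this second piece is genuinely $T'$-free and the induction applies to it.

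The recovery step you sketch for the delicate case also does not work as stated. You propose to ``match the on-chain sets to an induced embedding of $T'$,'' but the sets of $\cF$ lying on a single maximal chain form a chain, while $T'$ is a tree poset with, in general, many pairwise incomparable elements; no such matching exists unless $T'$ is a path. Embedding the branching structure of $T$ forces you to leave the chain at every branch vertex, with each off-chain witness subject to comparability \emph{and} distinctness constraints relative to all previously chosen images, and bounding the Lubell mass of the sets for which some extension fails is precisely the hard content of Bukh's proof --- it is not a single-constraint count of sets comparable to one fixed $U$. (Two smaller points: the theorem concerns weak containment, so the repeated appeal to \emph{induced} copies is out of place; and $\lambda(\cF)\ge h(T)-1+C/n$ only guarantees that \emph{some} chains carry $h(T)$ sets of $\cF$, which by itself yields one long chain in $\cF$, not a tree.)
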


Much less is known about the induced version of the problem. It has only been proved recently by Methuku and P\'alv\"olgyi \cite{MP} that for every poset $P$ there exists a constant $c_P$ such that $La^*(n,P) \le c_P\binom{n}{\lfloor \frac{n}{2}\rfloor}$ holds. (For a special class of posets this has already been established by Lu and Milans \cite{LM}.) As the list of known results on forbidden induced subposet problems is very short here we enumerate all such theorems.

\begin{theorem}[Katona, Tarj\'an \cite{KT}]
\label{thm:nonind}
For $n\ge 3$ we have $La(n,\{\wedge,\vee\})=La^*(n,\{\wedge,\vee\})=2\binom{n-1}{\lfloor n/2\rfloor}$.
\end{theorem}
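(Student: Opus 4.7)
The plan is to prove the lower bound by an explicit construction and the matching upper bound (for $La^*$, which dominates $La$) via a structural decomposition combined with Bollob\'as's set-pair inequality.

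For the lower bound I would take
\[
\cF_0 \;=\; \binom{[n-1]}{\lfloor n/2\rfloor}\;\cup\;\Bigl\{A\cup\{n\}:A\in\binom{[n-1]}{\lfloor n/2\rfloor}\Bigr\},
\]
of size $2\binom{n-1}{\lfloor n/2\rfloor}$ and partitioned into the pairs $(A,A\cup\{n\})$. A quick check shows that every element of $\cF_0$ has at most one superset and at most one subset inside $\cF_0$, so $\cF_0$ is even non-induced $\{\wedge,\vee\}$-free, yielding the lower bound for both $La$ and $La^*$.

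For the upper bound on $La^*$ the key structural observation is that any induced $\{\wedge,\vee\}$-free $\cF\subseteq 2^{[n]}$ decomposes into chain components $\cF_1,\dots,\cF_k$ whose elements across different components are pairwise incomparable. Indeed, if $A$ is comparable with both $B,C\in\cF$, then according to whether $A$ lies below both, above both, or between them, induced $\vee$-freeness, induced $\wedge$-freeness, or transitivity of $\subsetneq$ forces $B$ and $C$ comparable; hence comparability on $\cF$ is an equivalence relation whose classes are chains. Setting $m_i=\min\cF_i$, $M_i=\max\cF_i$, $d_i=|M_i|-|m_i|$ and $s_i=|\cF_i|\le d_i+1$, I apply Bollob\'as's set-pair inequality to the pairs $(m_i,\overline{M_i})$: disjointness $m_i\cap\overline{M_i}=\emptyset$ follows from $m_i\subseteq M_i$, while for $i\ne j$ the cross-component incomparability of $m_i$ and $M_j$ gives $m_i\cap\overline{M_j}\ne\emptyset$ (and symmetrically). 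Bollob\'as then yields
\[
\sum_{i=1}^k \frac{1}{\binom{n-d_i}{|m_i|}}\;\le\;1.
\]

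The remaining step, and the main obstacle, is the numerical inequality $(d+1)\binom{n-d}{\lfloor(n-d)/2\rfloor}\le 2\binom{n-1}{\lfloor n/2\rfloor}$ for all $0\le d\le n$ and $n\ge 3$. Granted it, using also $\binom{n-d_i}{|m_i|}\le\binom{n-d_i}{\lfloor(n-d_i)/2\rfloor}$, we obtain $s_i\le d_i+1\le 2\binom{n-1}{\lfloor n/2\rfloor}/\binom{n-d_i}{|m_i|}$; summing and applying the Bollob\'as bound gives $|\cF|\le 2\binom{n-1}{\lfloor n/2\rfloor}$, matching the construction. The inequality itself is verified case by case: $d=1$ is equality via $\binom{n-1}{\lfloor(n-1)/2\rfloor}=\binom{n-1}{\lfloor n/2\rfloor}$; $d=0$ reduces to Pascal's identity $\binom{n}{\lfloor n/2\rfloor}\le 2\binom{n-1}{\lfloor n/2\rfloor}$; for $1\le d\le n-2$ the function $f(d)=(d+1)\binom{n-d}{\lfloor(n-d)/2\rfloor}$ is non-increasing by the consecutive-ratio estimate $\frac{d+2}{d+1}\cdot\binom{n-d-1}{\lfloor(n-d-1)/2\rfloor}/\binom{n-d}{\lfloor(n-d)/2\rfloor}\le\frac{d+2}{2(d+1)}<1$ (using that the central-binomial ratio is $\le 1/2$ whenever $n-d\ge 2$); and the two boundary values $f(n-1)=n$ and $f(n)=n+1$ are both dominated by $n+1\le 2\binom{n-1}{\lfloor n/2\rfloor}$, which holds for $n\ge 3$. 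Combining the above with the construction yields $La(n,\{\wedge,\vee\})=La^*(n,\{\wedge,\vee\})=2\binom{n-1}{\lfloor n/2\rfloor}$.
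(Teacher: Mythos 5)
The paper itself offers no proof of this statement --- it is quoted from Katona and Tarj\'an \cite{KT} --- so there is nothing in the source to compare against line by line. Your argument is essentially the classical one: the two-level matching construction for the lower bound, the observation that induced $\{\wedge,\vee\}$-freeness makes comparability a transitive (hence equivalence) relation whose classes are chains that are pairwise incomparable across classes, and the Bollob\'as set-pair inequality applied to the pairs $(m_i,\overline{M_i})$, giving $\sum_i \binom{n-d_i}{|m_i|}^{-1}\le 1$. All of that is correct, and since the upper bound is proved for the induced problem while the construction is non-induced-free, both equalities follow.

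There is, however, one false step in your verification of the numerical inequality $(d+1)\binom{n-d}{\lfloor(n-d)/2\rfloor}\le 2\binom{n-1}{\lfloor n/2\rfloor}$. You assert that the consecutive central-binomial ratio $\binom{m-1}{\lfloor(m-1)/2\rfloor}/\binom{m}{\lfloor m/2\rfloor}$ is at most $1/2$ whenever $m\ge 2$. This is an equality for even $m$ but fails for odd $m=2\ell+1$, where the ratio equals $\frac{\ell+1}{2\ell+1}>\frac{1}{2}$ (e.g.\ $\binom{2}{1}/\binom{3}{1}=2/3$), so your bound $\frac{d+2}{2(d+1)}<1$ on $f(d+1)/f(d)$ is not justified when $n-d$ is odd. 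The monotonicity you want is nevertheless true: for $n-d=2\ell+1$ one computes $f(d+1)/f(d)=\frac{d+2}{d+1}\cdot\frac{\ell+1}{2\ell+1}\le 1$ exactly when $d\ell\ge 1$, which holds throughout the range $d\ge 1$, $n-d\ge 3$ that you actually use (the cases $d\in\{0,n-1,n\}$ and $n-d=2$ you already handle separately). So the slip is local and repairable by a one-line recomputation; with that repair the proof is complete.
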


\begin{theorem}[Katona, Tarj\'an \cite{KT} and Carroll, Katona \cite{CK}]
\label{thm:carkat}
$(1+\frac{1}{n}+O(\frac{1}{n^2}))\binom{n}{\lfloor n/2\rfloor}\le La(n, \vee)=La(n,\wedge)\le La^*(n, \vee)=La^*(n,\wedge)\le (1+\frac{2}{n}+O(\frac{1}{n^2}))\binom{n}{\lfloor n/2\rfloor}$.
\end{theorem}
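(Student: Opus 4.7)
The plan has four parts.

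\emph{Symmetries.} The bijection $\cF\mapsto\overline{\cF}=\{[n]\setminus F:F\in\cF\}$ reverses inclusion in $B_n$ and preserves sizes; since $\wedge$ is the dual poset of $\vee$, it is a bijection between $\vee$-free and $\wedge$-free families, and also between induced $\vee$-free and induced $\wedge$-free families. This yields the two equalities $La(n,\vee)=La(n,\wedge)$ and $La^*(n,\vee)=La^*(n,\wedge)$. The middle inequality $La(n,\vee)\le La^*(n,\vee)$ is immediate: a $\vee$-free family contains no (not necessarily induced) $\vee$ subposet, and in particular no induced $\vee$.

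\emph{Lower bound.} To show $La(n,\wedge)\ge(1+1/n+O(1/n^2))\binom{n}{\lfloor n/2\rfloor}$ I would exhibit an explicit $\wedge$-free family. Set $k=\lfloor n/2\rfloor$ and take $\cF=\binom{[n]}{k}\cup\cN$ for $\cN\subseteq\binom{[n]}{k-1}$. Since a family is $\wedge$-free (non-induced) iff every element has at most one strictly smaller element of $\cF$, and the middle layer itself is an antichain, the condition on $\cN$ becomes: no middle $k$-set contains two distinct members of $\cN$, equivalently, no two members of $\cN$ share a common $k$-superset. Double counting incidences $(F,G)$ with $F\in\cN$ and $G\in\binom{[n]}{k}$, $G\supsetneq F$, gives the a priori bound $|\cN|\le\binom{n}{k}/(n-k+1)$. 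A suitable explicit partial packing (or a greedy/random construction with alterations) produces $|\cN|\ge(1+o(1))\binom{n}{k}/(2(n-k+1))$, which upon substituting $k=\lfloor n/2\rfloor$ and adding the middle layer gives the stated lower bound.

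\emph{Upper bound.} Let $\cF$ be induced $\vee$-free. The key structural observation is that for every $F\in\cF$ the set $\cF_{>F}=\{G\in\cF:G\supsetneq F\}$ is a chain in $\cF$: otherwise two incomparable supersets would form an induced $\vee$ with $F$. Equivalently, the Hasse diagram of $\cF$ is a forest whose roots are the antichain $\cM$ of $\cF$-maximal elements, with each branch totally ordered inside $B_n$. In particular, every non-maximal $F\in\cF$ has a unique immediate successor $\phi(F)\in\cF$, and for each $G\in\cF$ its children $\phi^{-1}(G)\subseteq 2^G$ form an antichain. I would convert this structural description into the quantitative bound via a careful chain-decomposition or cyclic-permutation-type count that encodes each $F\in\cF$ by its upward path in the forest to a maximal element; bounding the number of such paths via LYM-type inequalities applied at each level of the forest is expected to produce the multiplicative factor $1+2/n+O(1/n^2)$ over $\binom{n}{\lfloor n/2\rfloor}$.

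\emph{Main obstacle.} The hard part is the upper bound. The induced $\vee$-free condition permits chains of arbitrary length in $\cF$, so the Lubell-style bound $|\cF\cap C|\le 2$ that works in the non-induced case is unavailable, and the naive LYM argument delivers only the weak $La^*(n,\vee)\le 2\binom{n}{\lfloor n/2\rfloor}$. Carrying out the refinement so that it extracts the sharp $1+2/n$ constant requires leveraging the forest structure -- in particular the uniqueness of the immediate-successor map $\phi$ and the antichain structure of the sibling sets $\phi^{-1}(G)$ -- via a subtle multi-level accounting. This is the Carroll--Katona contribution over the Katona--Tarjan chain argument of Theorem~\ref{thm:nonind}. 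The residual factor-two gap between the $1/n$ lower bound correction and the $2/n$ upper bound correction reflects the non-existence of a perfect Steiner system $S(k,k+1,n)$ in general, and closing it is not to be expected from this line of argument.
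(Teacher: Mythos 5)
The paper does not actually prove \tref{carkat} --- it is quoted from Katona--Tarj\'an and Carroll--Katona --- so there is no in-paper argument to compare against; judging your sketch on its own terms, the symmetry reductions (complementation swaps $\vee$ and $\wedge$; every $\vee$-free family is induced $\vee$-free) are fine, but both bounds have genuine gaps.

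The upper bound is not proved. The structural observation that $\cF_{>F}=\{G\in\cF:G\supsetneq F\}$ is a chain for every $F$ in an induced $\vee$-free family is correct and is the right starting point, but everything after it (``a careful chain-decomposition or cyclic-permutation-type count \dots is expected to produce the factor $1+2/n$'') is a statement of intent rather than an argument. Note that even the crude bound $2\binom{n}{\lfloor n/2\rfloor}$ is not delivered by ``naive LYM'' as you claim: an induced $\vee$-free family may contain arbitrarily long chains, so $|\cF\cap\cC|$ is unbounded over maximal chains $\cC$, and one already needs a partition of $\bC_n$ of the kind used in \sref{pr} (e.g.\ by the minimal element of $\cF\cap\cC$) to extract any constant. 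The passage from the forest structure to $1+\frac{2}{n}+O(\frac{1}{n^2})$ is the entire content of the Carroll--Katona argument and is missing here.

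The lower bound, as proposed, would fail quantitatively. You need $\cN\subseteq\binom{[n]}{k-1}$, no two members lying in a common $k$-superset (equivalently, no two members meeting in exactly $k-2$ elements), of size about $\frac{1}{n}\binom{n}{k-1}$. Each $(k-1)$-set conflicts with $(k-1)(n-k+1)=\Theta(n^2)$ others, so a greedy choice or a random-subset-plus-deletion argument yields only $|\cN|=\Theta\bigl(\binom{n}{k-1}/n^2\bigr)$, contributing $O(1/n^2)$ instead of the required $1/n$. The correct tool is an explicit algebraic packing, namely the classes $\cF_{n,k-1,i}=\{F\in\binom{[n]}{k-1}:\sum_{j\in F}j\equiv i \pmod n\}$ introduced at the start of \sref{pr}: two sets in one residue class that differ in a single element have distinct element-sums modulo $n$, so no class contains two sets meeting in $k-2$ elements, and the largest class has size at least $\frac{1}{n}\binom{n}{k-1}$, giving the clean $\bigl(1+\frac{1}{n}+O(\frac{1}{n^2})\bigr)\binom{n}{\lfloor n/2\rfloor}$. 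Even granting your asserted packing, a $(1+o(1))$ multiplicative loss only yields $1+\frac{1}{n}+o(\frac{1}{n})$, which is weaker than the stated $O(\frac{1}{n^2})$ error term.
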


Finally, the induced version of \tref{bukh} has been proved, but only with an $o(1)$ error term instead of $O(\frac{1}{n})$.

\begin{theorem}[Boehnlein, Jiang \cite{BJ}]
\label{thm:bj} Let $T$ be a tree poset. Then $\Sigma(n,h(T)-1)\le La^*(n,T) \le (h(T)-1+o(1))\binom{n}{\lfloor \frac{n}{2}\rfloor}$ holds.
\end{theorem}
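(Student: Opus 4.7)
The lower bound is immediate: take $\cF=\bigcup_{i=1}^{h(T)-1}\binom{[n]}{\lfloor(n-h(T)+1)/2\rfloor+i}$, the union of $h(T)-1$ consecutive middle levels. Then $|\cF|=\Sigma(n,h(T)-1)$, and since $\cF$ contains no chain of length $h(T)$ it is $P_{h(T)}$-free, hence in particular induced $T$-free.

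For the upper bound I would follow the blueprint of Bukh's proof of \tref{bukh}, adapted to handle incomparability. Write $h:=h(T)$ and for a full chain $C:\emptyset=C_0\subsetneq C_1\subsetneq\dots\subsetneq C_n=[n]$ set $\ell(C):=|\cF\cap C|$. The standard double count $\sum_C\ell(C)=\sum_{F\in\cF}|F|!(n-|F|)!\ge |\cF|\cdot\lfloor n/2\rfloor!\lceil n/2\rceil!$ over the $n!$ full chains yields
\[
|\cF|\le \binom{n}{\lfloor n/2\rfloor}\cdot \mathbb{E}_C[\ell(C)].
\]
Setting $B:=\Pr_C[\ell(C)\ge h]$ and using the trivial bound $\ell(C)\le n+1$, we have $\mathbb{E}_C[\ell(C)]\le h-1+B(n+1)$, so the theorem reduces to showing $B=o(1/n)$.

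The heart of the argument would be the contrapositive: if $B$ is not $o(1/n)$, then $\cF$ contains an induced copy of $T$. Fix a bad full chain $C$ and pick $F_1\subsetneq\dots\subsetneq F_h$ in $\cF\cap C$; this realizes the height-attaining \textbf{trunk} of $T$ as a (generally non-induced) embedding. To upgrade it to an induced copy of $T$, one must attach the remaining $|T|-h$ vertices of $T$ as side-branches hanging off trunk vertices, each realized by a set in $\cF$ that (a) lies in the correct Boolean interval $[F_{j-1},F_{j+1}]$ specified by its neighbour in $T$, (b) is comparable with the trunk sets in the pattern dictated by $T$, and (c) is \emph{incomparable} with every previously placed sibling branch in the pattern dictated by $T$. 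My plan is to build this embedding greedily, and to guarantee the search succeeds by using the averaging lower bound on $B$ to show that a positive fraction of bad chains admit, around each trunk vertex, a sufficiently rich local neighbourhood in $\cF$ for the Ramsey/pigeonhole step at that level to find the needed siblings.

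The main obstacle is condition (c): in the non-induced setting Bukh only needs comparabilities, which decouple along the trunk and yield the clean $O(1/n)$ error term. In the induced problem one must simultaneously forbid accidental comparabilities between branches attached at different trunk levels, and this couples distant parts of $T$. I would try to break the coupling in one of two ways: either by a Ramsey-type blow-up around each trunk vertex (produce many candidate siblings at each level, then pigeonhole to select a sub-collection realising the prescribed (in)comparability pattern), or by induction on $|T|$, peeling off a leaf of $T$, applying the theorem for the smaller tree to a suitable slice of $\cF$, and using local averaging to recover the last vertex. In either route the per-branching pigeonhole cost can be driven to $o(1)$ but not plainly to $O(1/n)$, which is precisely what forces the weaker error term in \tref{bj} compared with \tref{bukh}.
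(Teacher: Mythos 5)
First, a point of comparison: the paper does not prove this statement at all --- it is quoted verbatim as a known result of Boehnlein and Jiang \cite{BJ}, so there is no in-paper proof to measure you against. Judged on its own terms, your lower bound is correct and is the standard one: $h(T)-1$ consecutive middle levels contain no chain of $h(T)$ sets, hence no (induced or otherwise) copy of $T$, and their total size is exactly $\Sigma(n,h(T)-1)$.

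The upper bound, however, has a genuine gap: everything after the double-counting reduction is a plan rather than a proof. The entire technical content of the Boehnlein--Jiang theorem is precisely the step you defer --- showing that a family meeting too many chains too often must contain an \emph{induced} copy of $T$, i.e.\ realizing the side-branches so that they are incomparable to each other and to the prescribed trunk sets. You correctly identify this as the obstacle (condition (c)) and then offer two candidate strategies (``Ramsey-type blow-up'' or ``induction on $|T|$, peeling off a leaf'') without executing either; neither the existence of the ``sufficiently rich local neighbourhood,'' nor the pigeonhole selection of siblings with the prescribed incomparability pattern, nor the quantitative bookkeeping that turns this into an $o(1)$ bound on the excess is actually established. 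A secondary concern is that your reduction to $B=o(1/n)$ via the trivial bound $\ell(C)\le n+1$ is too lossy to be the right frame: one cannot in general show that all but an $o(1/n)$ fraction of full chains meet $\cF$ fewer than $h$ times, and the actual arguments in this area (Bukh's, and its induced adaptation) instead bound $\sum_{F\in\cF}1/\binom{n}{|F|}$ directly by splitting chains according to a finer partition (compare the min--max and $\min_r$--$\max_t$ partitions used throughout \sref{pr}) and charging each part separately. As it stands, the proposal establishes the lower bound only.
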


Before we state our results, let us formulate the induced analogue of \cjref{big}.

\begin{conjecture}
Let $P$ be a poset and let $e^*(P)$ denote the largest integer $k$ such that for any $j$ and $n$ the family $\cup_{i=1}^k\binom{[n]}{j+i}$ is induced $P$-free. Then $\pi^*(P)=\lim_{n\rightarrow \infty}\frac{La^*(n,P)}{\binom{n}{\lfloor \frac{n}{2}\rfloor}}$ exists and is equal to $e^*(P)$.
\end{conjecture}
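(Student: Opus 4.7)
The conjecture naturally splits into a lower and an upper bound. The lower bound $\pi^*(P) \ge e^*(P)$ is immediate by construction: setting $k = e^*(P)$ and $j = \lfloor (n-k)/2\rfloor$, the family $\cF_0 = \cup_{i=1}^k\binom{[n]}{j+i}$ is induced $P$-free by the very definition of $e^*(P)$, and standard binomial estimates give $|\cF_0| = (k+o(1))\binom{n}{\lfloor n/2\rfloor}$. The substantive content is therefore the matching upper bound together with existence of the limit, and this is what I would try to prove.

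For the upper bound my plan is to combine the chain-averaging strategy behind \tref{bukh} with the induced refinement of Boehnlein and Jiang from \tref{bj}. Writing $f_i = |\cF \cap \binom{[n]}{i}|$ and picking a uniformly random maximal chain $\cC$ in $B_n$, the LYM identity gives $\mathbb{E}|\cF\cap\cC| = \sum_i f_i/\binom{n}{i}$, so it would suffice to show that $|\cF\cap\cC| \le e^*(P)+o(1)$ on average. For tree posets this is exactly Bukh's double count of tree embeddings against chains, and Boehnlein--Jiang adapt the count to the induced setting by a careful greedy embedding; one hopes that an analogous averaging works for arbitrary $P$ provided the right quantity is measured on each chain.

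The main obstacle is that induced $P$-freeness for non-tree $P$ is genuinely global: a maximal chain $\cC$ of $B_n$ consists of pairwise comparable sets, so $\cC$ alone can never witness a non-chain induced copy of $P$, and a purely local bound of the form $|\cF\cap\cC|\le e^*(P)$ can therefore never be sharp enough to characterise the extremal families. The route that seems most promising to me is a two-step reduction. First, use the Methuku--P\'alv\"olgyi theorem $La^*(n,P)=O(\binom{n}{\lfloor n/2\rfloor})$ together with a trace/shadow argument to concentrate $\cF$ on $O_P(1)$ consecutive middle levels, losing at most $o(\binom{n}{\lfloor n/2\rfloor})$ elements. Second, on those levels run a supersaturation argument via a random embedding $B_m \hookrightarrow B_n$ of a small Boolean lattice, showing that if the normalised density on the relevant levels exceeds $e^*(P)$ then a random $B_m$ already contains an induced copy of $P$ with positive probability, contradicting induced $P$-freeness.

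The genuine technical difficulty, and presumably the reason the paper only confirms the conjecture for the specific posets $K_{r,s_1,\dots,s_j,t}$, is that random restriction preserves containments but not non-containments: the induced incomparabilities demanded by $P$ can be destroyed under projection to $B_m$, and controlling them uniformly in $P$ seems to require additional structure available for highly symmetric posets but not in general. Any general attack will, I expect, have to replace the random restriction with a structural step that explicitly produces the required incomparable pairs from the density of $\cF$, and it is precisely this step that the known proofs for complete multi-level posets circumvent by exploiting the symmetry of $K_{r,s_1,\dots,s_j,t}$.
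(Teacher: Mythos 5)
You are addressing a statement that the paper itself presents only as a conjecture: the paper contains no proof of it, and verifies it only for particular complete multi-level posets (\tref{twopart}, \tref{threepartnonind}, \tref{rest}, \tref{threepartind}) via the chain partition method. Your proposal, read honestly, does not prove it either. The only step you actually carry out is the easy half: taking $k=e^*(P)$ consecutive middle levels yields an induced $P$-free family of size $(e^*(P)+o(1))\binom{n}{\lfloor n/2\rfloor}$, which gives $\liminf_n La^*(n,P)/\binom{n}{\lfloor n/2\rfloor}\ge e^*(P)$. (Even here, writing this as ``$\pi^*(P)\ge e^*(P)$'' is premature, since the existence of the limit defining $\pi^*(P)$ is itself part of what is to be proved.) Everything else in your text is a description of candidate strategies together with your own explanation of why each one fails.

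The two gaps you identify are indeed fatal as stated, and no replacement is supplied. First, a chain-local bound $|\cF\cap\cC|\le e^*(P)+o(1)$ for a random maximal chain $\cC$ cannot follow from induced $P$-freeness alone when $P$ is not a chain, because a maximal chain is totally ordered and never witnesses the incomparabilities an induced copy of $P$ requires; this is precisely why the paper does not bound $|\cF\cap\cC|$ chain by chain but instead partitions $\bC_n$ using markers $A\subseteq B$ and exploits the antichain structure of $\cF$ inside the interval $[A,B]$ via the preliminary lemmas of Section~\ref{sec:lemmas}. Second, the proposed random restriction $B_m\hookrightarrow B_n$ with a supersaturation argument preserves comparabilities but not incomparabilities, so it cannot certify induced copies; you say this yourself. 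Consequently the upper bound $\limsup_n La^*(n,P)/\binom{n}{\lfloor n/2\rfloor}\le e^*(P)$, which is the entire content of the conjecture, remains unproved. Your assessment of where the difficulty lies is accurate and consistent with what the paper actually does for its special cases, but what you have written is a research plan, not a proof, and it should not be presented as one.
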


In the present paper, we address both the induced and the non-induced problem for complete multi-level posets. Let $K_{r_1,r_2,\dots, r_s}$ denote the poset on $\sum_{i=1}^sr_i$ elements $a^1_1,a^1_2,\dots, a^1_{r_1}$, $a^2_1, a^2_2,\dots, a^2_{r_2},\dots, a^s_1,a^s_2,\dots, a^s_{r_s}$ with $a^i_{h} < a^j_{l}$ if and only if $i<j$. The \textit{rank} $r(a^i_l)$ of the element $a^i_l$ is $i$. Our first result gives not only the asymptotics of $La^*(n,K_{r,t})$, but also the order of magnitude of the second order term of the extremal value. The constructions that show the lower bounds in this and later theorems are based on the same idea that will be described at the beginning of \sref{pr}.

\begin{theorem}
\label{thm:twopart}
For any positive integers $2 \le r,t$ we have $\Sigma(n,2)+(\frac{r+t-2}{n}-O_{r,t}(\frac{1}{n^2}))\binom{n}{\lfloor n/2\rfloor}\le La^*(n,K_{r,t})\le (2+\frac{2(r+t-2)}{n}+o(\frac{1}{n}))\binom{n}{\lfloor n/2\rfloor}$.
\end{theorem}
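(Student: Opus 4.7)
The two bounds will be proved separately. Since $K_{r,t}$ has height 2 but is not a tree, neither the chain-partition bound of Erd\H{o}s nor the Bukh or Boehnlein--Jiang arguments apply directly; the crux is that an induced $K_{r,t}$ in $B_n$ requires $r$ pairwise incomparable sets to be simultaneously contained in $t$ other pairwise incomparable sets.

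\textbf{Lower bound.} Set $m=\lceil n/2\rceil$ and start from the two middle levels $\cF_0=\binom{[n]}{m-1}\cup \binom{[n]}{m}$, of size $\Sigma(n,2)$. Since two distinct size-$m$ sets share at most one size-$(m-1)$ subset, $\cF_0$ contains no induced $K_{r,t}$ when $\min(r,t)\ge 2$. I would enlarge $\cF_0$ by adding a thin family $\cG^+\subseteq\binom{[n]}{m+1}$ and, dually, $\cG^-\subseteq\binom{[n]}{m-2}$, each partitioned into $r-1$ (resp.\ $t-1$) blocks of sets with controlled pairwise intersections, arranged so that no $t$ members of $\cG^+$ can jointly contain $r$ common size-$(m-1)$ subsets from $\cF_0$, and analogously for $\cG^-$. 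A suitable choice (e.g.\ fix $r-1$ pairwise disjoint ``markers'' and pin a controlled cone to each) yields $|\cG^+|+|\cG^-|=(\frac{r+t-2}{n}-O_{r,t}(\frac{1}{n^2}))\binom{n}{\lfloor n/2\rfloor}$. Induced $K_{r,t}$-freeness of the enlarged family is verified by a case analysis on the levels hosting the bottom and top of a would-be $K_{r,t}$.

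\textbf{Upper bound.} Let $\cF$ be induced $K_{r,t}$-free. The plan is to bound the Lubell-type sum $L(\cF)=\sum_{F\in\cF}\binom{n}{|F|}^{-1}$ and then conclude via $|\cF|\le L(\cF)\binom{n}{\lfloor n/2\rfloor}$ (using $\binom{n}{i}\le\binom{n}{\lfloor n/2\rfloor}$ for every $i$). The middle levels contribute at most $2$. For the high side, the key structural fact is: if $t$ sets $A_1,\dots,A_t\in\cF$ with $|A_i|\ge m+1$ all contain $r$ common sets $C_1,\dots,C_r\in\cF$ of size $m-1$, then these form an induced $K_{r,t}$, contradicting the assumption; hence no $t$ high-level sets can share as many as $r$ common size-$(m-1)$ members of $\cF$. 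A double counting of incidences $(C,A)$ with $C\subset A$, $|C|=m-1$, $|A|\ge m+1$, combined with $f_{m-1}\le\binom{n}{m-1}$, yields $\sum_{k\ge 1}f_{m+k}/\binom{n}{m+k}\le \frac{r-1}{n}+o(\frac{1}{n})$; the dual argument (on size-$(m+1)$ supersets of low-level sets) gives the corresponding $\frac{t-1}{n}$ contribution.

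\textbf{Main obstacle.} The technical heart is the case when the middle levels of $\cF$ are themselves far from full: the $r$ size-$(m-1)$ bottoms needed for the structural argument may fail to exist in $\cF$, so the argument must be re-routed through another level pair (e.g.\ using size-$m$ subsets, or transferring counting mass between neighboring levels). Managing this trade-off cleanly, without losing a constant factor when $\cF$ is close to (but not equal to) the two middle levels, is what produces the factor $2$ in the target coefficient $\frac{2(r+t-2)}{n}$ and accounts for the $o(\frac{1}{n})$ error term.
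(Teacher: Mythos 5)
Your lower-bound construction is essentially the paper's: two full middle levels plus a $\frac{r-1}{n}$-fraction of level $\lceil n/2\rceil-2$ and a $\frac{t-1}{n}$-fraction of level $\lceil n/2\rceil+1$, chosen so that any $r$ of the added bottom sets have union of size at least $\lceil n/2\rceil$ and any $t$ of the added top sets have intersection of size at most $\lceil n/2\rceil-1$ (the paper realizes this with the residue classes $\cF_{n,k,i}$ of $\sum_{j\in F}j$ modulo $n$). Note that you have the parameters attached to the wrong sides --- the number of ``blocks'' at the bottom must be $r-1$, since it controls unions of the would-be minimal elements, and at the top $t-1$ --- and the case analysis you defer is short but should actually be written out; there is no real obstacle there.

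The upper bound is where your plan has a genuine gap, in fact two. First, your ``key structural fact'' is false as stated: $t$ sets $A_1,\dots,A_t\in\cF$ of size at least $m+1$ all containing $r$ common size-$(m-1)$ members of $\cF$ do \emph{not} form an induced $K_{r,t}$ unless the $A_i$ are pairwise incomparable; a chain of $t$ supersets yields no copy of $K_{r,t}$ at all. Second, and more fundamentally, your incidence count anchors the witnessing antichain of size $r$ at level $m-1$ (and dually at $m+1$), but an arbitrary induced $K_{r,t}$-free family need not contain any sets at those levels, and the antichains that force the structure can be spread across many levels. You name this as the ``main obstacle'' but offer no mechanism to overcome it, and a global level-by-level double count does not obviously provide one. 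The paper's resolution is precisely the $\min_r$-partition of the set $\bC_n$ of maximal chains: each chain is assigned to its minimal member $A$ with $s^-_{\cF}(A)\ge r$, induced $K_{r,t}$-freeness then forces $s^+_{\cF}(A)<t$, and the number of incidences $(F,\cC)$ with $\cC\in\bC_{A,r}$ is bounded by $(2+\frac{2(r+t-2)}{n}+o(\frac{1}{n}))|\bC_{A,r}|$ via \lref{noincomp} and \cref{plus}, after which summing over the partition gives the LYM-type inequality. This chain localization is the missing idea, and it is also the true source of the factor $2$ in $\frac{2(r+t-2)}{n}$ (a set $A$ of size about $n/2$ has roughly $\frac{2}{n}|A|!$ maximal chains through any fixed subset one level below it), rather than any trade-off between sparse and full middle levels.
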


Note that the same upper bound for $La(n,K_{r,t})$ follows from \tref{bukh} as $K_{r,t}$ is an (induced) subposet of $K_{r,1,s}$ and $K_{r,1,s}$ is a tree poset. By the same argument, \tref{bj} implies the asymptotics of $La^*(n,K_{r,t})$ but its error term is worse than that of \tref{twopart}. Let us remark that $La(n,K_{2,2})=\Sigma(n,2)$ was shown by De Bonis, Katona, Swanepoel \cite{DKS}. As they also showed the uniqueness of the extremal family, it was known that the strict inequality $La(n,K_{2,2})<La^*(n,K_{2,2})$ holds. \tref{twopart} tells us the order of magnitude of the gap between these two parameters.

Then we turn our attention to the three level case of $K_{r,s,t}$. To do so we need to introduce the following notation: for positive integers $r,t$ let 

\begin{eqnarray*}
f(r,t)=\left\{
\begin{array}{cc} %rr ll cr lc
0 & \textnormal{if}\ ~ r=t=1,\\
1 & \textnormal{if}\ r=1, t>1 \hskip 0.2truecm\textnormal{or}\ r>1,t=1, \\
2 & \textnormal{if}\ r,t> 2.
\end{array}
\right.
\end{eqnarray*}

\newpage

Also, for any integer $s\ge 2$ let us define $m=m_s= \lceil\log_2(s-f(r,t) + 2)\rceil$ and $m^*=m^*_s=\min\{m: s\le  \binom{m}{\lceil m/2\rceil}\}$ and for any real number $z$, let $z^+$ denote $\max\{0,z\}$. Note that $m^*_s$ is the minimum integer $m$ such that $2^{[m]}$ contains an antichain of size $s$ and thus an interval $[A,B]$ contains an antichain of size $s$ if and only if $|B\setminus A|\ge m^*_s$. Another equivalent formulation is to say that an interval $[A,B]$ contains an induced copy of $K_{1,s,1}$ if and only if $|B\setminus A|\ge m^*_s$. Similarly, an interval $[A,B]$ contains a non-induced copy of $K_{1,s,1}$ if and only if $|B\setminus A|\ge \lceil\log_2(s-f(1,1) + 2)\rceil$. It may seem foolish to denote 0 by $f(1,1)$, but we will see later how the function $f$ comes into the picture.

\vskip 0.3truecm

Our next theorem deals with the non-induced problem for complete three-level posets $K_{r,s,t}$. The main term of all of our bounds depends on the value of $r$ and $t$ via the function $f$. For most values of $s$ we can determine $\pi(K_{r,s,t})$, for the rest we obtain an upper bound that is bigger than our lower bound by less than one.

\begin{theorem}
\label{thm:threepartnonind}
Let $s-f(r,t) \ge 2$. 

(1) If $s-f(r,t) \in [2^{m_s-1} -1,2^{m_s} - \binom{m_s}{\lceil \frac{m_s}{2}\rceil}-1]$, then  $\pi(K_{r,s,t}) = e(K_{r,s,t}) = m_s+f(r,t)$ holds. In particular, we have 

$\Sigma(n,m_s+f(r,t))+\left(\frac{(r-2)^++(t-2)^+}{n}-O_{r,t}(\frac{1}{n^2})\right)\binom{n}{\lceil\frac{n}{2}\rceil}\le La(n,K_{r,s,t}) \le (m_s+f(r,t)+\frac{2(r+t-2)}{n}+o(\frac{1}{n}))\binom{n}{\lceil \frac{n}{2}\rceil}.$

\vskip 0.2truecm

(2) If $s-f(r,t) \in [2^{m_s} - \binom{m_s}{\lceil \frac{m_s}{2}\rceil},2^{m_s} -2]$, then

$\Sigma(n,m_s+f(r,t))+(\frac{(r-2)^++(t-2)^+}{n}-O_{r,t}(\frac{1}{n^2}))\binom{n}{\lceil\frac{n}{2}\rceil} \le La(n,K_{r,s,t})\le (m_s+f(r,t)+1- \frac{2^{m_s}-s+f(r,t)-1}{\binom{m}{\lceil \frac{m_s}{2}\rceil}})\binom{n}{\lceil\frac{n}{2}\rceil}$ holds.
\end{theorem}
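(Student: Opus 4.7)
Both parts share the same lower bound, so I would prove it once. Set $k = m_s + f(r,t)$ and take $l$ with $\{l+1, \dots, l+k\}$ centered around $\lceil n/2 \rceil$. Let $\cF_0 = \bigcup_{i=1}^k \binom{[n]}{l+i}$, so $|\cF_0| = \Sigma(n,k)$. A direct check shows $\cF_0$ is $K_{r,s,t}$-free: if the bottoms $A_i$, middles $C_h$, and tops $B_j$ embed into $\cF_0$, then $U := \bigcup_i A_i$ and $V := \bigcap_j B_j$ give an interval $[U,V]$ of dimension $|V|-|U| \le k-1$ containing the $s$ middle sets while still demanding room in the original level range for the $r$ bottoms and $t$ tops. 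The value $m_s = \lceil \log_2(s-f(r,t)+2)\rceil$ together with the three-case book-keeping constant $f(r,t)$ is exactly calibrated so that this count is impossible. I would then augment $\cF_0$ by $\le \frac{(r-2)^++(t-2)^+}{n}\binom{n}{\lceil n/2 \rceil}$ sets on the adjacent levels $l$ and $l+k+1$, following the construction template used in \tref{twopart}: each added set could play only the role of an ``extra'' bottom (resp.\ top), and the configuration forcing $K_{r,s,t}$ would require further $\cF_0$-bottoms (resp.\ tops) that the same interval-dimension obstruction blocks.

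For the upper bound in part (1), I would adapt the Lubell-style chain averaging from the proof of \tref{twopart}. Averaging $|\cF \cap C|$ over maximal chains $C$ of $B_n$ reduces the problem to a local claim: if $\cF$ contains a long chain $F_0 \subsetneq \cdots \subsetneq F_K$ with $K > m_s + f(r,t) - 1$, then, provided $|\cF|$ is near the extremal value, the density of $\cF$ on the levels $|F_0|$ and $|F_K|$ allows one to pick $r-1$ additional bottoms and $t-1$ additional tops, while the $s$ middles arise from sets within an appropriate sub-interval $[F_i,F_j]$ with $|F_j|-|F_i| \ge m_s$. This produces a forbidden $K_{r,s,t}$. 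Accounting for the ``budget'' of $r+t-2$ spoiled chain positions explains the $\frac{2(r+t-2)}{n}$ correction.

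For part (2), where $s$ is so close to $2^{m_s}$ that not every $m_s$-dimensional sub-cube admits enough $K_{r,s,t}$-free antichains of size $s$, I would refine the averaging: each chain passing through an extra $(m_s + f(r,t)+1)$-st level contributes only a fractional weight $1 - \frac{2^{m_s} - s + f(r,t) - 1}{\binom{m_s}{\lceil m_s/2 \rceil}}$, reflecting how many antichains still fit into an $m_s$-cube before $K_{r,s,t}$ becomes unavoidable. Summing these weighted contributions over all chains yields the coefficient in part (2). The main obstacle I foresee is the implication ``long chain $\Rightarrow K_{r,s,t}$'': a chain is only $P_K$, so producing genuine $r$ bottoms, $s$ middles, and $t$ tops requires a careful exploitation of the density of $\cF$ at each endpoint level. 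In particular, the sharp coefficient in part (2) demands exact counting of antichains inside the relevant cube rather than an asymptotic bound, and this is where the definitions of $m_s$ and the constant $\binom{m_s}{\lceil m_s/2 \rceil}$ come directly into play.
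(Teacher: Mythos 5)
Your lower bound is essentially the paper's construction (the $m_s+f(r,t)$ consecutive full levels, augmented by the partial levels $\binom{[n]}{k}_{(r-2)^+,mod}$ and $\binom{[n]}{l}_{(t-2)^+,mod}$ built from the residue classes of $\sum_{j\in F}j \pmod n$), and the interval-dimension obstruction you describe is the right verification, though you leave the role of the added partial levels at the level of a gesture: the property one actually needs is that any $(r-2)^++1$ distinct sets of the bottom partial level have union of size at least $k+2$, which is what forces $|\cap_j C_j|-|\cup_i A_i|\le m_s-1$ in all cases.

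The upper bound, however, has a genuine gap. Your reduction --- ``if $\cF$ contains a chain of length $>m_s+f(r,t)-1$ then, since $|\cF|$ is near-extremal, density on the endpoint levels supplies $r-1$ extra bottoms, $t-1$ extra tops, and $s$ middles in a sub-interval of dimension $\ge m_s$'' --- does not work. The extremal family itself consists of $m_s+f(r,t)$ full levels and therefore contains many such chains, so the implication cannot hold; moreover, for a non-induced $K_{r,s,t}$ the $r$ bottoms must all lie below the \emph{intersection} of the $s$ middles, and density of $\cF$ on a level gives no such sets. An interval of dimension $\ge m_s$ also need not contain $s$ members of $\cF$ at all. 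The paper instead partitions the maximal chains by two markers: $A$, the smallest set of the chain with $s^-_{\cF}(A)\ge r$, and $B$, the largest with $s^+_{\cF}(B)\ge t$ (the $\min_r$--$\max_t$-partition, with a separate degenerate case when $B\subsetneq A$). The decisive structural fact, absent from your sketch, is that for such $A\subseteq B$ the trace $\{F\in\cF:A\subseteq F\subseteq B\}$ is $K_{1,s-f(r,t),1}$-free, after which the sharp constants $m_s$ and $m_s+1-\frac{2^{m_s}-s+f(r,t)-1}{\binom{m_s}{\lceil m_s/2\rceil}}$ come from the Griggs--Li--Lu bound on the Lubell mass of $K_{1,s',1}$-free families (\lref{nonindlem}), while the $\frac{2(r+t-2)}{n}$ term comes from \cref{plus} applied below $A$ and above $B$. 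Note also that for the non-induced problem the relevant count inside an $m_s$-cube is the total number of sets in an interval, not the number of antichains; your part-(2) discussion conflates this with the induced setting, and the ``fractional weight per chain'' you propose is not tied to any quantity you can actually bound.
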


Note that the special case $r=t=1$ of \tref{threepartnonind} was already obtained by Griggs, Li and Lu \cite{GLL_dia}.
Let us state a result that covers the case $s=2$, $f(r,t)>0$.

\begin{theorem}
\label{thm:rest} For any pair of integers $r,t$ with $f(r,t)>0$ we have $\Sigma(n,3)+(\frac{(r-2)^++(t-2)^+}{n}-O_{r,t}(\frac{1}{n^2}))\binom{n}{\lceil \frac{n}{2}\rceil}\le La(n,K_{r,2,t}) \le (3+\frac{2(r+t-2)}{n}+o(\frac{1}{n}))\binom{n}{\lceil \frac{n}{2}\rceil}.$ In particular, $\pi(K_{r,2,t})=3$ holds.
\end{theorem}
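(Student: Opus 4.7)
The proof of \tref{rest} splits into a lower bound construction and an upper bound argument.

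\textbf{Lower bound.} Start with the three central levels
\[
\cF_0=\binom{[n]}{\lfloor n/2\rfloor-1}\cup\binom{[n]}{\lfloor n/2\rfloor}\cup\binom{[n]}{\lfloor n/2\rfloor+1},
\]
of size $\Sigma(n,3)$. The first step is to verify that $\cF_0$ is $K_{r,2,t}$-free whenever $f(r,t)>0$. In any embedded copy of $K_{r,2,t}$ in $\cF_0$, the strict containments between bottoms, middles, and tops, together with all sizes lying in $\{\lfloor n/2\rfloor-1,\lfloor n/2\rfloor,\lfloor n/2\rfloor+1\}$, force bottoms to have size $\lfloor n/2\rfloor-1$, middles $\lfloor n/2\rfloor$, tops $\lfloor n/2\rfloor+1$. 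Then $|M_1\cap M_2|\ge\lfloor n/2\rfloor-1$ and $|M_1\cup M_2|\le\lfloor n/2\rfloor+1$ together with $M_1\ne M_2$ yield $|M_1\triangle M_2|=2$, pinning down $M_1\cap M_2$ as the unique candidate bottom and $M_1\cup M_2$ as the unique candidate top in $\cF_0$. This contradicts $r\ge 2$ or $t\ge 2$, at least one of which holds under $f(r,t)>0$. To pick up the $\frac{(r-2)^++(t-2)^+}{n}\binom{n}{\lceil n/2\rceil}$ second-order contribution, augment $\cF_0$ using the sparse antichain construction from \sref{pr}: add $(r-2)^+$ layers below level $\lfloor n/2\rfloor-1$ and $(t-2)^+$ layers above $\lfloor n/2\rfloor+1$, each of size $\approx\binom{n}{\lceil n/2\rceil}/n$. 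The family stays $K_{r,2,t}$-free because each added set admits fewer than $r$ (respectively $t$) potential antichain partners for completing the bottom (respectively top) of the forbidden copy.

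\textbf{Upper bound.} The key observation is that $K_{r,2,t}$ is a (non-induced) subposet of the tree poset $K_{r,1,1,t}$. Indeed, sending the two incomparable middle elements of $K_{r,2,t}$ to the two comparable middle elements of $K_{r,1,1,t}$ gives an injection preserving every $\le$-relation of $K_{r,2,t}$, and the extra relation between the middles in $K_{r,1,1,t}$ is permitted under non-induced containment. Hence every $K_{r,2,t}$-free family is $K_{r,1,1,t}$-free, so
\[
La(n,K_{r,2,t})\le La(n,K_{r,1,1,t})\le\bigl(3+O_{r,t}(1/n)\bigr)\binom{n}{\lceil n/2\rceil}
\]
by \tref{bukh} applied to the height-$4$ tree poset $K_{r,1,1,t}$; in particular $\pi(K_{r,2,t})=3$.

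To sharpen the error term to the stated $\frac{2(r+t-2)}{n}+o(1/n)$, we bypass Bukh's theorem and argue directly, paralleling the upper bound in \tref{twopart}. The plan is: take a symmetric chain decomposition of $B_n$ and bound $|\cC\cap\cF|\le 3$ on most chains $\cC$, with excess contributions confined to ``thick'' chains whose intersection with $\cF$ clusters near the top or bottom of the lattice. The main obstacle is controlling the number of such thick chains: one shows that each thick chain can carry at most $r-1$ (resp.\ $t-1$) antichain companions in $\cF$ before the two middle elements of $\cC$ together with these companions produce a $K_{r,2,t}$, which caps the total contribution of thick chains at $\frac{2(r+t-2)}{n}\binom{n}{\lceil n/2\rceil}$, exactly matching the stated second-order error.
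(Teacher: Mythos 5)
Your lower bound is essentially the paper's: the three middle levels augmented by the classes $\binom{[n]}{k}_{(r-2)^+,mod}$ and $\binom{[n]}{l}_{(t-2)^+,mod}$, and your verification that the three central levels alone are $K_{r,2,t}$-free (forcing $A_i=M_1\cap M_2$ and $C_l=M_1\cup M_2$, hence $r=t=1$) is correct. Your freeness argument for the augmented family is stated rather loosely, but the mechanism you invoke is the right one and is the same one the paper uses for the general $K_{r,s,t}$ construction.

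On the upper bound, your reduction to the tree poset $K_{r,1,1,t}$ and appeal to \tref{bukh} is a legitimate, genuinely different route to the assertion $\pi(K_{r,2,t})=3$ (the paper makes the analogous remark for $K_{r,t}\subseteq K_{r,1,s}$ after \tref{twopart}), and it is arguably the quickest way to get the main term. However, it only yields $3+O_{r,t}(\tfrac1n)$ with an unspecified constant, so the stated second-order term $\tfrac{2(r+t-2)}{n}+o(\tfrac1n)$ is not proved by it, and this is where your proposal has a genuine gap: the ``direct argument'' you sketch is a plan, not a proof. A symmetric chain decomposition of $B_n$ is the wrong vehicle here --- nothing bounds $|\cC\cap\cF|$ by $3$ on a typical symmetric chain (a single chain can meet $\cF$ in up to $r+t$ sets without creating any copy of $K_{r,2,t}$ when $r,t$ are large), your notions of ``thick chain'' and ``antichain companion'' are undefined, and no mechanism is given that converts ``at most $r-1$ companions'' into the normalized count $\tfrac{2(r+t-2)}{n}\binom{n}{\lceil n/2\rceil}$. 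What the paper actually does is partition the set $\bC_n$ of \emph{maximal} chains via the $\min_{r^{++}}$--$\max_{t^{++}}$-partition with $r^{++}=\max\{r,2\}$, $t^{++}=\max\{t,2\}$: for a chain with markers $A\subseteq B$, the decisive observation is that $|\cF\cap[A,B]|\le 1$, because any two sets of $\cF$ in $[A,B]$ (comparable or not) together with an antichain of size $r^{++}\ge r$ below $A$ and one of size $t^{++}\ge t$ above $B$ form a non-induced $K_{r,2,t}$; the contributions from $F\subsetneq A$ and $B\subsetneq F$ are then controlled by \cref{plus} and \lref{noincomp}, giving $1+\left(1+\tfrac{2(r^{++}-1)}{n}\right)+\left(1+\tfrac{2(t^{++}-1)}{n}\right)+o(\tfrac1n)$ pairs per chain and hence the LYM-type bound. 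Your sketch never identifies this interval-counting step, which is the actual content of the refined upper bound.
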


As we mentioned earlier Burcsi and Nagy \cite{BN_doublechain} obtained the general bound $$\limsup_n\frac{La(n,P)}{\binom{n}{\lceil n/2\rceil}}\le b(P):=\frac{|P|+L(P)}{2}-1,$$ where $L(P)$ is the length of the longest chain in $P$. Consequently, they proved \cjref{big} whenever $e(P)=b(P)$ holds. They provided seven small such examples all of which are complete $1$-, $2$-, or 
$3$-level posets. Furthermore, they introduced two operations how to obtain new posets $P$ satisfying $e(P)=b(P)$ starting from two other posets $P_1,P_2$ possessing this property. 
All resulting posets are complete multilevel ones. In particular, they obtained $La(n,K_{1,2,2})=La(n,K_{2,2,1})=\Sigma(n,3)$.

We turn our attention to the general case of $K_{r,s_1,s_2,\dots, s_j,t}$. As there are more technical details in calculating $e(K_{r,s_1,s_2,\dots, s_j,t})$ than in calculating $e^*(K_{r,s_1,s_2,\dots, s_j,t})$ we will only consider the induced problem in its full generality.

\begin{prop}
\label{prop:ind} 

(i) If $s_i\ge 2$ holds for all $1 \le i \le j$, then we have $e^*(K_{r,s_1,s_2,\dots, s_j,t})=f(r,t)+\sum_{i=1}^jm^*_{s_i}$.

(ii) Let us write $w=|\{i:s_{i-1}=s_i=1\}|$, where $r=s_0$ and $t=s_{j+1}$. Then $e^*(K_{r,s_1,s_2,\dots, s_j,t})=w+e^*(K_{r,\sigma_1,\sigma_2,\dots, \sigma_{j'},t})$, where $\sigma_1,\sigma_2,\dots, \sigma_{j'}$ is the sequence obtained from $s_1,s_2,\dots, s_j$ by removing all its ones.
\end{prop}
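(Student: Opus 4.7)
The plan for both parts is to track, for an induced embedding of $K := K_{r,s_1,\dots,s_j,t}$ into $B_n$ (with $L = j+2$ ranks), the four parameters $Y_i := \bigcap \cS_i$, $X_i := \bigcup \cS_i$, $\mu_i := \min_{S \in \cS_i}|S|$, and $M_i := \max_{S \in \cS_i}|S|$ at each rank $i$, where $\cS_i$ is the family mapped to rank $i$. The induced property forces $\cS_i$ to be an antichain, so by Sperner's theorem applied to the interval $[Y_i,X_i]$ one has $|X_i \setminus Y_i|\ge m^*_{|\cS_i|}$; the partial-order constraints additionally yield $X_i \subseteq Y_{i+1}$ and the strict increase $M_i < \mu_{i+1}$.

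For part (i), I plan to show that the level span $M_L-\mu_1$ of any induced embedding equals exactly $f(r,t)+\sum_{i=1}^j m^*_{s_i}$. Telescoping through $Y_1 \subseteq X_1 \subseteq \dots \subseteq Y_L \subseteq X_L$ gives the bound $M_L - \mu_1 \ge (|X_1|-\mu_1)+\sum_{i=2}^{L-1}|X_i\setminus Y_i|+(M_L-|Y_L|)$ after discarding the non-negative cross-rank corrections $|Y_{i+1}|-|X_i|$, and then each group lower-bounds cleanly: the middle summands are $\ge m^*_{s_{i-1}}$ by the Sperner bound, while each boundary summand is $\ge [r\ge 2]$ or $\ge [t\ge 2]$ respectively, because whenever $|\cS_i|\ge 2$ any two incomparable sets of minimum size $\mu_i$ have union strictly larger than $\mu_i$ (dually for $M_L$). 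For the matching construction I will glue Boolean intervals end-to-end: allocate a fresh block of $m^*_{s_i}$ coordinates for each middle rank and realise the $s_i$ sets there as a maximum single-level antichain, then add one extra coordinate at the bottom (resp.\ top) whenever $r\ge 2$ (resp.\ $t\ge 2$); a direct computation then yields span exactly $f(r,t)+\sum m^*_{s_i}$.

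For part (ii), I plan an inductive reduction on the number of middle indices $i\in\{1,\dots,j\}$ with $s_i=1$. Removing one such middle singleton decreases $w$ by $1$ if $s_{i-1}=1$ or $s_{i+1}=1$ and leaves it unchanged otherwise, by a short case analysis on $(s_{i-1},s_{i+1})$. The matching claim that $e^*$ changes by the same amount is the technical heart: an isolated middle $1$ can be inserted into or deleted from any tight embedding without altering the span, because the boundary slacks $|X_{i-1}|-M_{i-1}\ge 1$ and $\mu_{i+1}-|Y_{i+1}|\ge 1$ supplied by part (i) already leave a free intermediate level; conversely, a middle $1$ adjacent to another $1$ contributes a genuinely new unit strict-increase gap in the optimal embedding, so deleting such a $1$ saves exactly one level. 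Iterating the removal reduces to a sequence with no middle $1$'s, where part (i) applies. The main expected obstacle is cleanly disentangling, for a general multi-level embedding, the strict-increase contribution $\max(1, u_i'+d_{i+1}')$ to each gap $\mu_{i+1}-M_i$ (with $u_i'=|X_i|-M_i$ and $d_i'=\mu_i-|Y_i|$) from the antichain-width contribution $m^*_{s_i}$, so that the induction step matches up without overcounting.
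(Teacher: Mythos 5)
Your part (i) is correct and follows essentially the same route as the paper: bound the span of any induced embedding from below by telescoping along $Y_1\subseteq X_1\subseteq Y_2\subseteq\cdots\subseteq X_L$, charging $m^*_{s_i}$ to each middle rank via the defining property of $m^*$ and one unit to each boundary rank of width at least $2$, and match this with a glued-blocks construction. (Small slip of wording: the span of an arbitrary embedding is only bounded below by, not equal to, $f(r,t)+\sum_i m^*_{s_i}$; your displayed inequality is the correct statement.)

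Part (ii) has a genuine gap, and it sits exactly where you flagged it. Your induction needs, for a middle $1$ at position $i$ with a neighbouring $1$, that deleting it decreases the minimum span by at least one; merely dropping the singleton from an embedding does not shrink the span, since the extreme levels are attained at the bottom and top ranks. You assert that such a $1$ ``contributes a genuinely new unit strict-increase gap'' but give no mechanism for converting that gap into a shorter embedding of the reduced poset, and your closing sentence concedes this. The missing idea is the paper's contraction. In your one-at-a-time setting it reads: if $s_{i-1}=s_i=1$ are realised by $G_{i-1}\subsetneq G_i$, pick $x\in G_i\setminus G_{i-1}$; then $x$ lies in every set of rank at least $i$ and in no set of rank at most $i-1$, so deleting $G_i$ and removing $x$ from every set of rank at least $i+1$ yields an induced copy of the reduced poset (removing a common element from all sets that contain it preserves every inclusion and every incomparability) whose largest set has shrunk by one, hence span drops by one. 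The paper does this for a whole maximal run of $p$ singletons at once, contracting $J\setminus I$ where $I$ is the union of the wide level below the run and $J$ the intersection of the wide level above it, the chain of singletons forcing $|J\setminus I|\ge p-1$. The reverse operation (pass to $B_{n+1}$ and adjoin a fresh element to all sets above the insertion point) is what you need for the matching upper bound in the adjacent-$1$ case, which you also left unaddressed. Finally, note that your incremental bookkeeping computes $w(\mathrm{original})-w(\mathrm{reduced})$ rather than $w$; these agree except in the degenerate all-ones (chain) case, where the telescoped total, not the literal $w$ of the statement, is the quantity that matches the change in $e^*$.
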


\begin{proof}
To see (i), let $\cF$ consist of $f(r,t)+\sum_{i=1}^jm^*_{s_i}$ consecutive levels of $2^{[n]}$ and suppose we find an induced copy of $K_{r,s_1,s_2,\dots, s_j,t}$. If $F_1,\dots, F_r$ and $F'_1,\dots,F'_t$ play the role of the bottom $r$ and the top $t$ sets, then $|\cap_{i=1}^tF'_i|-|\cup_{k=1}^rF_j|<\sum_{l=1}^jm^*_{s_i}$ holds. If $F^{i}_1,\dots,F^{i}_{s_{i}}$ play the role of the sets of the $i$th middle level of $K_{r,s_1,s_2,\dots, s_j,t}$, then $|\cup_{j=1}^{s_i}F^{i}_j|\ge |\cup_{j=1}^{s_{i-1}}F^{i-1}_j|+s_j$ must hold. Thus one would need $\sum_{i=1}^j m^*_{s_i}$ more levels for the $j$ middle levels of $K_{r,s_1,s_2,\dots, s_j,t}$. It is easy to see that $f(r,t)+\sum_{i=1}^jm^*_{s_i}+1$ consecutive levels do contain an induced copy of $K_{r,s_1,s_2,\dots, s_j,t}$.

To see (ii), assume $\cG$ is a copy of an induced $K_{r,s_1,s_2,\dots, s_j,t}$ in $2^{[n]}$. Let $i,i+p$ be two indices such that $s_i,s_{i+p+1}\ge 2$ and $s_{i+h}=1$ for all $1\le h\le p$. Let $G^{i}_1,\dots, G^{i}_{s_i}$ and $G^{i+p+1}_1,\dots, G^{i+p+1}_{s_{i+p+1}}$ denote the sets in $\cG$ corresponding to the $i$th and $(i+p+1)$st level of $K_{r,s_1,s_2,\dots, s_j,t}$. Then for $I=\cup_{l=1}^{s_i}G^{i}_l$ and $J=\cap_{l=1}^{s_{i+p+1}}G^{i+p+1}_l$ we must have $I \subseteq J$ and $|J|-|I|\ge p-1$ as $\cG$ contains a chain of length $p$ in $[I,J]$. For $G \in \cG$ let us write $r(G)$ for the rank of the element corresponding to $G$. Then $\cG'=\{G\in \cG:r(G)\le i\} \cup \{G\setminus (J\setminus I): G\in \cG, r(G)\ge i+p+1\}$ is an induced copy of $K_{r,s_1,s_2,\dots,s_i,s_{i+p+1},\dots s_j,t} $ such that the size of the largest set in $\cG'$ is $(p-1)$ less than than the size of the largest set in $\cG$. Continuing this process we obtain a copy of $K_{r,\sigma_1,\sigma_2,\dots, \sigma_{j'},t}$ where the size of the largest set is $w$ less than the size of the largest set in $\cG$. This shows $e^*(K_{r,\sigma_1,\sigma_2,\dots, \sigma_{j'},t})\le e^*(K_{r,s_1,s_2,\dots, s_j,t})-w$. To see the other inequality, one has to reverse the above procedure. We leave the details to the reader.
\end{proof}

\begin{theorem}
\label{thm:threepartind} 

(i) For any positive integers $1 \le r,t$ we have $\Sigma(n,4+f(r,t))+(\frac{r+t-2}{n}-O_{r,t}(\frac{1}{n^2}))\binom{n}{\lceil n/2\rceil}\le La^*(n,K_{r,4,t})\le (4+f(r,t)+\frac{2(r+t-2)}{n}+o(\frac{1}{n}))\binom{n}{\lfloor n/2\rfloor}$. 

\noindent In particular, $\pi^*(K_{r,4,t})=4+f(r,t)$ holds.

(ii) For any constant $c$ with $1/2<c<1$ there exists an integer $s_c$ such that if $s \ge s_c$ and $s\le c\binom{m^*_s}{\lceil m^*_s/2\rceil}$, then we have
 $\Sigma(n,m^*_s+f(r,t))+(\frac{r+t-2}{n}-O_{r,t}(\frac{1}{n^2}))\binom{n}{\lceil n/2\rceil}\le La^*(n,K_{r,s,t})\le(m^*_s+f(r,t)+\frac{2(r+t-2)}{n}+o(\frac{1}{n}))\binom{n}{\lfloor n/2\rfloor}$. In particular, $\pi^*(K_{r,s,t})=m^*_s+f(r,t)$ holds.

(iii) There exists an integer $s_0$ such that for any $r,s,t$ with $s\ge s_0$ we have $\Sigma(n,m^*_s+f(r,t))+(\frac{r+t-2}{n}-O_{r,t}(\frac{1}{n^2}))\binom{n}{\lceil n/2\rceil}\le La^*(n,K_{r,s,t})\le (m^*_s+1+f(r,t)+\frac{2(r+t-2)}{n}+o(\frac{1}{n}))\binom{n}{\lfloor n/2\rfloor}$.

(iv) For any constant $c$ with $1/2<c<1$ there exists an integer $s_c$ such that if all $s_i$'s satisfy that either $s_i=4$ or $s_i \ge s_c$ and $s_i\le c\binom{m^*_{s_i}}{\lceil m^*_{s_i}/2\rceil}$, then we have $La^*(n,K_{r,s_1,s_2,\dots, s_j,t})=(e^*(K_{r,s_1,s_2,\dots, s_j,t})+O_{r,t}(\frac{1}{n}))\binom{n}{\lfloor n/2\rfloor}$.
\end{theorem}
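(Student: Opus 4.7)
I would combine a lower-bound construction inherited from parts (i)--(iii) with an upper-bound argument that lifts the single-middle-level chain-partition techniques of parts (i) and (ii) to the multi-level setting by inductively extracting the $j$ middle antichains along a long chain in $\cF$.

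\textbf{Lower bound.} By Proposition \ref{prop:ind}(i), $e^*:=e^*(K_{r,s_1,\dots,s_j,t})=f(r,t)+\sum_{i=1}^j m^*_{s_i}$. I would take the $e^*$ consecutive levels of $B_n$ centered at the middle, together with the same small top and bottom caps that supply the $\frac{r+t-2}{n}\binom{n}{\lceil n/2\rceil}$ correction in parts (i)--(iii). Induced $K_{r,s_1,\dots,s_j,t}$-freeness of the base family follows from the equivalence recalled before \tref{threepartnonind}: an interval $[A,B]$ carries an induced $K_{1,s,1}$ iff $|B\setminus A|\ge m^*_s$, so in any putative induced copy the $j$ middle antichains would demand rank gap $\ge\sum_i m^*_{s_i}$ between the union of the bottom $r$ sets and the intersection of the top $t$ sets, whereas the construction offers only $\sum_i m^*_{s_i}-1$.

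\textbf{Upper bound.} Let $\cF$ be induced $K_{r,s_1,\dots,s_j,t}$-free. I would generalize the chain partition argument of parts (i) and (ii): bound the Lubell sum $\sum_{F\in\cF}1/\binom{n}{|F|}=\mathbb{E}_\cC|\cC\cap\cF|$ by $e^*+O_{r,t}(1/n)$, plus the standard edge correction for sets at levels far from $n/2$ handled exactly as in parts (i) and (ii). Suppose some maximal chain $\cC=(C_0\subsetneq\dots\subsetneq C_n)$ satisfies $|\cC\cap\cF|\ge e^*+K$ for a large constant $K=K(r,t,s_1,\dots,s_j)$. I would choose $j+2$ anchors $C_{\ell_0}\subsetneq C_{\ell_1}\subsetneq\dots\subsetneq C_{\ell_{j+1}}$ inside $\cC\cap\cF$ with $\ell_{i+1}-\ell_i\ge m^*_{s_i}+1$ for each $1\le i\le j$, together with enough additional chain elements at the two ends to build the bottom $r$ and top $t$ sets of $K_{r,s_1,\dots,s_j,t}$ (an extraction that costs an additional $f(r,t)$ rank gap, exactly as in parts (i) and (ii)). In each interval $[C_{\ell_i},C_{\ell_{i+1}}]$ I would then extract an induced antichain of $\cF$ of size $s_i$ by invoking the tool of part (i) when $s_i=4$ or the tool of part (ii) when $s_i\ge s_c$ and $s_i\le c\binom{m^*_{s_i}}{\lceil m^*_{s_i}/2\rceil}$. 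Since the anchors form a chain and each extracted antichain sits strictly inside its anchor interval, all cross-layer comparabilities are forced in the correct direction, and the resulting assembly is an induced copy of $K_{r,s_1,\dots,s_j,t}$ inside $\cF$, a contradiction.

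\textbf{Main obstacle.} The delicate step is the joint anchor choice. Each single-middle-level extraction of parts (i) or (ii) needs a specific amount of "slack" in its interval, and interleaving $j$ of them along a single chain requires reserving slack simultaneously for all layers and matching each slack with the correct anchor interval. This reduces to a pigeonhole on the positions inside $\cC\cap\cF$ whose failure probability averages to $O_{r,t}(1/n)$ over $\cC$, accounting for the error in the final bound. The case $s_i=4$ is the most sensitive, because the density-based extraction of part (i) is tight and does not succeed in worst-case intervals; the plan is to discard $O_{r,t}(1)$ chain elements to make that extraction robust, at the price of inflating $K$ by a constant. Carrying out this bookkeeping in parallel across the $j$ layers while keeping the $j$ extracted antichains and the bottom $r$ and top $t$ sets mutually induced is where the heaviest technical work sits, and it is precisely what forces the hypothesis that each $s_i$ be either $4$ or large with $s_i\le c\binom{m^*_{s_i}}{\lceil m^*_{s_i}/2\rceil}$: these are exactly the regimes in which a robust single-layer extraction is available.
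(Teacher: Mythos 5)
Your lower bound is fine and matches the paper: by \pref{ind}(i) the $e^*$ consecutive middle levels are induced $K_{r,s_1,\dots,s_j,t}$-free, and the modular caps supply the second-order term. The upper bound, however, rests on a step that is false. You propose to derive a contradiction from a single maximal chain $\cC$ with $|\cC\cap\cF|\ge e^*+K$ by extracting the $j$ middle antichains from anchor intervals $[C_{\ell_i},C_{\ell_{i+1}}]$ along $\cC$. But the sets of $\cF$ lying on one chain are totally ordered, and the length of an interval tells you nothing about whether $\cF$ contains an antichain of size $s_i$ inside it: take $\cF$ to be a single maximal chain. That family has no antichain of size $2$, hence is induced $K_{r,s_1,\dots,s_j,t}$-free for all $s_i\ge 2$, yet one chain carries all $n+1$ of its members, so no constant $K$ works and the contradiction you aim for cannot be reached. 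Moreover, even if a uniform bound $|\cC\cap\cF|\le e^*+K$ did hold for every chain, it would only yield $|\cF|\le(e^*+K)\binom{n}{\lfloor n/2\rfloor}$, an error of an additive constant in the Lubell sum rather than the claimed $O_{r,t}(\frac{1}{n})$; the $O(\frac1n)$ error can only come from averaging over chains, i.e., from a genuine partition of $\bC_n$.

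The paper's argument is structured quite differently. It partitions $\bC_n$ by markers defined through antichains of $\cF$ anywhere in the cube, not on the chain: $A_0$ is the smallest set of $\cC$ with $s^-_{\cF}(A_0)\ge r$, and, recursively, $A_i$ is the smallest set of $\cC$ above $A_{i-1}$ such that $[A_{i-1},A_i]$ contains an antichain of $\cF$ of size $s_i$, the process stopping when $s^+_{\cF}(A_{i-1})<s_i$. Within each class $\bC_{A_0,\dots,A_i}$ the incidences $(F,\cC)$ are counted interval by interval: \cref{plus} and \lref{noincomp} handle the pieces below $A_0$ and above $A_i$ (contributing $f(r,t)+O(\frac1n)$ per chain), and \lref{ind4} bounds the incidences in each $[A_{k-1},A_k]$ by $m^*_{s_k}$ per chain, using only that the trace of $\cF$ on that interval has no antichain of size $s_k$. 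The hypotheses that each $s_i$ equals $4$ or is large with $s_i\le c\binom{m^*_{s_i}}{\lceil m^*_{s_i}/2\rceil}$ enter precisely in \lref{ind4}, which is a Lubell-type counting lemma and not an extraction lemma; nowhere is an antichain pulled out of a dense interval. To repair your proof you would need to replace the single-chain contradiction by this marker partition and use \lref{ind4} as a per-class incidence bound.
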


\vskip 0.2truecm

Our main technique to prove all four theorems is the chain partition method \cite{GLL_dia, GL_partition}. The remainder of the paper is organized as follows: in \sref{lemmas} we prove some preliminary lemmas and introduce all the necessary definitions that will be used in the proofs of \tref{twopart}, \tref{threepartnonind}, \tref{rest}, and \tref{threepartind}. Then in \sref{pr} we prove our results.

\section{Preliminary definitions and lemmas}
\label{sec:lemmas}

In this section we prove some preliminary lemmas that will serve as building blocks in the proofs of our main theorems. Before stating and proving these lemmas, let us enumerate all definitions that we will use later on in the paper.

Let $\bC_n$ denote the set of maximal chains in $[n]$.
For a family $\cF\subseteq 2^{[n]}$ of sets and $A \subseteq [n]$  we define $s^-_{\cF}(A)$ to be the maximum size of an antichain in $\{ F \in\cF:F \subseteq A\}$. and $s^+_{\cF}(A)$ to be the maximum size of an antichain in $\{ F \in\cF:A \subseteq F\}$. For a set $A\subseteq [n]$ and a family $\cF$ of sets let $\bC_{A,k,-}$ denote the set of those maximal chains $\cC$ from $\emptyset$ to $A$ for which for every $C \in \cC\setminus \{A\}$ we have $s^-_{\cF}(C)<k$ and let $\bC_{A,k,+}$ denote the set of those maximal chains $\cC$ from $A$ to $[n]$ for which for every $C \in \cC\setminus \{A\}$ we have $s^+_{\cF}(C)<k$.

The \textit{min-max-partition} of $\bC_n$ (introduced by Griggs Li and Liu in \cite{GLL_dia}) with respect to a family $\cF\subseteq 2^{[n]}$ is $\{\bC_{A,B}: A\subseteq B \subseteq [n]\}$ where $\bC_{A,B}$ consists of those maximal chains in $\bC_n$ of which the smallest set that belongs to $\cF$ is $A$ and of which the largest set that belongs to $\cF$ is $B$. To obtain a real partition of $\bC_n$ one has to add $\bC_{\emptyset}=\{\cC\in \bC_n: \cC\cap \cF=\emptyset\}$.

For $r\ge 2$ let us now define the \textit{$min_r$-partition} of $\bC_n$ with respect to $\cF$. For a set $A$ with $s^-_{\cF}(A)\ge r$ we set $\bC_{A,r}=\{\cC\in\bC_n: A \in \cC, \forall C\subset A, C\in \cC: s^-_{\cF}(C)<r\}$. Note that every $\cC\in\bC_n$ belongs to exactly one set $\bC_{A,r}$ provided $\cF$ contains an antichain of size $r$ as then $s^-_{\cF}([n])\ge r$ and $[n]$ is contained in all maximal chains $\cC\in\bC_n$. Thus $\{\bC_{A,r}: s^-_{\cF}(A)\ge r\}$ is a partition of $\bC_n$.

Now we define the \textit{$min_r-max_t$-partition} of $\bC_n$. Before introducing the formal definition, we describe the idea of the partition. For the sake of simplicity assume that both $r$ and $t$ are at least $2$. For every chain $\cC \in \bC_n$ we want to introduce two markers $A,B\in \cC$ with the property that $A$ is the smallest set in $\cC$ below which there exists an antichain of size $r$ in $\cF$ (i.e., $s^-_{\cF}(A)\ge r$) and $B$ is the largest set in $\cC$ above which there exists an antichain of size $t$ in $\cF$ (i.e., $s^+_{\cF}(B)\ge t$). If $\cF$ is $K_{r,s,t}$-free, we know that $[A,B]\cap \cF$ contains less than $s$ sets, while if $\cF$ is induced $K_{r,s,t}$-free, then $[A,B]$ does not contain an antichain of size $s$. The problem with the above reasoning is that $B\subsetneq A$ might hold, thus we will have to distinguish two cases.

Let us start with introducing $\cS=\{S\in 2^{[n]}: s_{\cF}^-(S)\ge r\}$, the family of those sets that can play the role of $A$ in the above argument. We partition $\cS$ into two subfamilies: $\cS^-=\{S \in \cS: s^+_{\cF}(S)<t\}$ and $\cS^+=\cS\setminus \cS^-$. Clearly, if $A\in \cS^-$ is the smallest set in the chain $\cC\in\bC_n$ with $s^-_{\cF}(A)\ge r$, then for the largest set $B$ in $\cC$ with $s^+_{\cF}(B)\ge t$ we will have $B\subsetneq A$. 

For any set $S \in\cS^-$ let $\bC_S$ denote the set of those maximal chains $\cC$ in $\bC_n$ in which
\begin{itemize}
\item
if $r=1$, then $S$ is the smallest set in $\cF \cap \cC$,
\item
if $r\ge 2$, then $S$ is the smallest set in $\cC$ with $s^-_{\cF}(S)\ge r$.
\end{itemize}
For any set $A \in S^+$ and $B$ with $A \subseteq B$ let $\bC_{A,B}=\bC_{A,r,B,t}$ denote the set of those maximal chains $\cC$ in $\bC_n$ in which
\begin{itemize}
\item
if $r=1$, then $A$ is the smallest set in $\cF \cap \cC$,
\item
if $r\ge 2$, then $A$ is the smallest set in $\cC$ with $s^-_{\cF}(A)\ge r$,
\item
if $t=1$, then $B$ is the largest set in $\cF \cap \cC$,
\item
if $t\ge 2$, then $B$ is the largest set in $\cC$ with $s^+_{\cF}(B)\ge t$.
\end{itemize}
The $\min_r-\max_t$-partition of $\bC_n$ is $\{\bC_S: S\in \cS^-\}\cup \{\bC_{A,B}: A\in \cS^+,A\subseteq B\}$. Consider a maximal chain $\cC\in\bC_n$. If $r\ge 2$ and the size $z$ of the largest antichain in $\cF$ satisfies $z=s^-_{\cF}([n])\ge \max\{r,t\}$, then there is a smallest set $H$ of $\cC$ with $s^-_{\cF}(H)\ge r$. If $H \in \cS^-$, then $\cC$ belongs to $\bC_H$. If not, then $H \in \cS^+$ and thus for the largest set $H'$ of $\cC$ with $s^+_{\cF}\ge t$ we have $H \subseteq H'$ and therefore $\cC \in \bC_{H,H'}$ holds. We obtained that the $\min_r-\max_t$-partition of $\bC_n$ is indeed a partition if $r\ge 2$. If $r=1$, then we need to add the set $\bC_{\emptyset}=\{\cC\in\bC_n:\cC\cap \cF=\emptyset\}$.

\vskip 0.2truecm

After introducing the necessary definitions, we start to prove our preliminary lemmas that will serve as building blocks of our proofs in \sref{pr}.

\vskip 0.2truecm

\begin{lemma}
\label{lem:noincomp} Let $\cF\subseteq 2^{[n]}$ be a family such that all $F\in \cF$ have size in $[n/2-n^{2/3},n/2+n^{2/3}]$. 

(i) Let $A\subset [n]$ with $s^-_{\cF}(A)<k$. Then the number of pairs $(F,\cC)$ where $\cC$ is a maximal chain from $\emptyset$ to $A$ and $F\in \cF \cap (\cC\setminus \{A\})$ is $\frac{2(k-1)}{n}|A|!+o(\frac{1}{n}|A|!)$.

(ii) Let $A\subset [n]$ with $s^+_{\cF}(A)<k$. Then the number of pairs $(F,\cC)$ where $\cC$ is a maximal chain from $A$ to $[n]$ and $F\in \cF \cap (\cC\setminus \{A\})$ is $\frac{2(k-1)}{n}(n-|A|)!+o(\frac{1}{n}(n-|A|)!)$.
\end{lemma}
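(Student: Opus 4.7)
The plan is to prove (i) by a weighted chain-counting argument based on Dilworth's theorem, and deduce (ii) by complementation. Since each pair $(F,\cC)$ with a fixed $F\subsetneq A$ contributes $|F|!(|A|-|F|)!$ chains passing through $F$, after dividing by $|A|!$ it is enough to show
\[ \sum_{F\in\cF,\, F\subsetneq A}\binom{|A|}{|F|}^{-1} \;\le\; \frac{2(k-1)}{n}+o\!\left(\frac{1}{n}\right). \]
The hypothesis $s^-_{\cF}(A)<k$ means the subposet $\cF_A:=\{F\in\cF:F\subsetneq A\}$ has width at most $k-1$, so Dilworth's theorem partitions it into chains $C_1,\dots,C_{k-1}$. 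The task reduces to bounding the contribution $\sum_{F\in C_j}\binom{|A|}{|F|}^{-1}$ of a single chain by $\tfrac{2}{n}+o(\tfrac{1}{n})$.

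Within a chain the sizes $|F|$ are distinct integers lying in $[n/2-n^{2/3},\,n/2+n^{2/3}]\cap[0,|A|-1]$; set $d:=|A|-|F|$. Three regimes arise. If $|A|<n/2-n^{2/3}$ the admissible set is empty and the chain contributes $0$. If $|A|\in[n/2-n^{2/3},\,n/2+n^{2/3}+1]$, then $d=1$ may be admissible, contributing $\tfrac{1}{|A|}=\tfrac{2}{n}+o(\tfrac{1}{n})$, where we use $|A|=n/2+O(n^{2/3})$ in the last step; for every other admissible $d\ge 2$, the uniform estimate $\binom{|A|}{d}^{-1}\le\binom{|A|}{2}^{-1}=O(1/n^2)$ (valid since $2\le d\le|A|-2$ throughout this regime for $n$ large) summed over the $O(n^{2/3})$ such values yields only $O(n^{-4/3})=o(1/n)$. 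Finally, if $|A|\ge n/2+n^{2/3}+2$ then every admissible $d$ lies in $[2,|A|-2]$, the same uniform bound applies to every term, and the chain contributes $O(n^{2/3}\cdot n^{-2})=o(1/n)$.

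Summing over the $k-1$ chains and multiplying by $|A|!$ proves (i). Part (ii) follows by applying (i) to the complemented family $\overline{\cF}$ and to $\overline{A}=[n]\setminus A$: the size condition on $\cF$ is symmetric about $n/2$ and is therefore inherited by $\overline{\cF}$, the bijection $\cC\mapsto\overline{\cC}$ sends maximal chains from $A$ to $[n]$ to maximal chains from $\emptyset$ to $\overline{A}$, and $s^+_{\cF}(A)<k$ translates exactly into $s^-_{\overline{\cF}}(\overline{A})<k$. The technical crux is the chain-level inverse-binomial estimate above; the leading constant $2/n$ comes solely from the single ``boundary'' term $d=1$, and the main thing to verify is that all remaining admissible terms contribute only $o(1/n)$, for which the crude uniform bound $\binom{|A|}{d}^{-1}=O(1/n^2)$ for $2\le d\le|A|-2$ is already sufficient.
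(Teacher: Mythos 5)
Your proof is correct and follows essentially the same route as the paper: part (i) reduces to bounding $\sum_{F\in\cF,\,F\subsetneq A}\binom{|A|}{|F|}^{-1}$, with the $\frac{2(k-1)}{n}$ main term coming solely from the sets of size $|A|-1$ and all other admissible sizes contributing $O(n^{2/3}\cdot n^{-2})=o(1/n)$, and part (ii) is obtained by the same complementation. The only cosmetic difference is that you invoke Dilworth's theorem to get at most $k-1$ contributing sets per size class, whereas the paper simply observes that each size class is itself an antichain and hence has at most $k-1$ members of $\cF_A$; the resulting estimates are identical.
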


\begin{proof}
We start by proving (i). The property possessed by $A$ and $\cF$ ensures that $\cF_A:=\{F\in \cF: F\subset A\}$ contains at most $k-1$ sets of each possible size. Thus the number of pairs $(F,\cC)$ in question is at most 
\[
\sum_{i=n/2-n^{2/3}}^{\min\{n/2+n^{2/3},|A|-1\}}(k-1)i!(|A|-i)!\le \frac{k-1}{|A|}|A|!+\frac{2(k-1)}{|A|(|A|-1)}|A|!+\frac{12(k-1)n^{2/3}}{|A|(|A|-1)(|A|-2)}|A|!
\]
\[
\le \frac{2(k-1)}{n}|A|!+O_k(\frac{1}{n^{4/3}}|A|!)
\]
 if $|A|\ge (1/2-n^{-1/3})n$. If $|A|\le (1/2-n^{-1/3})n=n/2-n^{2/3}$, then $\cF$ does not contain any subset $F$ of $A$. This completes the proof of (i) and (ii) follows by applying (i) to the set $\overline{A}$ and the family $\overline{\cF}$.
\end{proof}

\noindent \textbf{Remark.} Note that $n^{2/3}$ could be replaced by any function $f(n)$ satisfying $4\log n\sqrt{n}\le f(n)=o(n)$. In the proof of \lref{noincomp} we used $f(n)=o(n)$ and at the beginning of the proofs of upper bounds in \sref{pr}, we will need a calculation involving Chernoff's inequality where the assumption $4\log n\sqrt{n}\le f(n)$ will be used.

\begin{corollary}
\label{cor:plus} Let $\cF\subseteq 2^{[n]}$ be a family such that all $F\in \cF$ have size in $[n/2-n^{2/3},n/2+n^{2/3}]$. 

(i) Let $A\subset [n]$ with $s^-_{\cF}(A)\ge k$. Then the number of pairs $(F,\cC)$ where $\cC\in \bC_{A,k,-}$ and $F\in \cF \cap (\cC\setminus \{A\})$ is $(1+\frac{2(k-1)}{n})|\bC_{A,k,-}|+o(\frac{1}{n}|\bC_{A,k,-}|)$.

(ii) Let $A\subset [n]$ with $s^+_{\cF}(A)\ge k$. Then the number of pairs $(F,\cC)$ where $\cC\in\bC_{A,k,+}$ and $F\in \cF \cap (\cC\setminus \{A\})$ is $(1+\frac{2(k-1)}{n})|\bC_{A,k,+}|+o(\frac{1}{n}|\bC_{A,k,+}|)$.
\end{corollary}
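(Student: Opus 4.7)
The plan is to reduce the statement to \lref{noincomp} by conditioning on the penultimate set of each chain in $\bC_{A,k,-}$. First I would observe that every $\cC\in\bC_{A,k,-}$ terminates at $A$, and its immediate predecessor has the form $A\setminus\{a\}$ for some $a\in A$; by the defining condition of $\bC_{A,k,-}$, this predecessor must satisfy $s^-_{\cF}(A\setminus\{a\})<k$. Conversely, monotonicity of $s^-_{\cF}$ implies that any maximal chain from $\emptyset$ to such an $A\setminus\{a\}$ stays inside the region $\{C:s^-_{\cF}(C)<k\}$, so appending $A$ produces a chain in $\bC_{A,k,-}$. This yields the identity $|\bC_{A,k,-}|=a(A,k)\cdot(|A|-1)!$, where $a(A,k):=|\{a\in A:s^-_{\cF}(A\setminus\{a\})<k\}|$.

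Next I would fix a valid $a\in A$ and split the pairs $(F,\cC)$ whose chain has $A\setminus\{a\}$ as predecessor into two groups according to whether $F=A\setminus\{a\}$ or $F\subsetneq A\setminus\{a\}$. Deleting $A$ from $\cC$ yields a maximal chain $\cC'$ from $\emptyset$ to $A\setminus\{a\}$, and the second group is in bijection with the pairs $(F,\cC')$ counted by \lref{noincomp}(i) applied to $A'=A\setminus\{a\}$; since $s^-_{\cF}(A\setminus\{a\})<k$, that lemma bounds the second group by $\frac{2(k-1)}{n}(|A|-1)!+o(\frac{1}{n}(|A|-1)!)$. The first group contributes at most $(|A|-1)!$ pairs in total for this $a$ (exactly $(|A|-1)!$ if $A\setminus\{a\}\in\cF$, and $0$ otherwise), one per chain with predecessor $A\setminus\{a\}$. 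Summing these two bounds and then summing over the $a(A,k)$ valid values of $a$ gives the claimed upper bound $(1+\frac{2(k-1)}{n})|\bC_{A,k,-}|+o(\frac{1}{n}|\bC_{A,k,-}|)$.

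Part (ii) will follow from part (i) via the complementation symmetry $s^+_{\cF}(C)=s^-_{\overline{\cF}}(\overline{C})$, which matches maximal chains from $A$ to $[n]$ with maximal chains from $\emptyset$ to $\overline{A}$ and identifies $\bC_{A,k,+}$ relative to $\cF$ with $\bC_{\overline{A},k,-}$ relative to $\overline{\cF}$; the sizes of the members of $\overline{\cF}$ still lie in $[n/2-n^{2/3},n/2+n^{2/3}]$ by symmetry of this window around $n/2$, so the hypothesis of (i) is intact after complementation. The one genuine subtlety worth flagging is the origin of the leading constant $1$: because $s^-_{\cF}(A)\ge k$, \lref{noincomp}(i) cannot be invoked directly at $A$, which forces peeling off the top edge of the chain; the penultimate set $A\setminus\{a\}$ exposed by this peeling may itself lie in $\cF$, and the crude bound of one pair per chain that this contributes is precisely what produces the ``$+1$'' summand alongside the genuine $\frac{2(k-1)}{n}$ term inherited from \lref{noincomp}(i) on the truncated chain.
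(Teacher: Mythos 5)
Your proposal is correct and follows essentially the same route as the paper: the paper likewise decomposes $\bC_{A,k,-}$ according to the penultimate set $A_i$ of size $|A|-1$ with $s^-_{\cF}(A_i)<k$, applies \lref{noincomp}(i) to each such $A_i$ for the pairs with $F\subsetneq A_i$, accounts for the extra $+1$ by the possibility that the $A_i$'s themselves lie in $\cF$, and deduces (ii) by complementation. No gaps.
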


\begin{proof}
First we prove (i). Let $A_1,\dots ,A_j,A_{j+1},\dots,A_{|A|}$ denote the subsets of $A$ of size $|A|-1$ such that $s^-_{\cF}(A_i)<k$ if and only if $1\le i \le j$. (If $s^-_{\cF}(A_i)\ge k$ for all $i$, then $\bC_{A,k,-}$ is empty and there is nothing to prove.) Note that if $S_1\subset S_2$, then $s^-_{\cF}(S_2)< k$ implies $s^-_{\cF}(S_1)<k$. Therefore $\bC_{A,k,-}=\cup_{i=1}^j\bC_{A_i,A}$, where
$\bC_{A_i,A}$ denotes the set of those maximal chains from $\emptyset$ to $A$ that contain $A_i$. Indeed, $\bC_{A_i,A} \subset \bC_{A,k,-}$ for $1\le i\le j$ as by the above $A$ is the smallest set in a chain $\cC \in \bC_{A_i,A}$ with $s^-_{\cF}(A)$ at least $k$, while for all $i\ge j+1$ we have $s^-_{\cF}(A_j)\ge k$ and thus $\bC_{A_j,A}\cap \bC_{A,k,-}=\emptyset$.

Let us fix $i$ with $1\le i\le j$ and consider pairs $(F,\cC)$ with $F\in\cF\cap \cC$ and $\cC\in\bC_{A_i,A}$. As $s^-_{\cF}(A_i)<k$, we can apply \lref{noincomp} (i) to $\cF$ and $A_i$, and obtain that the number of such pairs with $F \subsetneq A_i$ is at most $\frac{2(k-1)}{n}|A_i|!+o(\frac{1}{n}|A_i|!)$. Even if all $A_i$'s belong to $\cF$, then every chain $\cC\in\bC_{A,k,-}$ can contain one more set from $\cF$, namely one of the $A_i$'s. This completes the proof of (i) and (ii) follows by applying (i) to the set $\overline{A}$ and the family $\overline{\cF}$.
\end{proof}

\begin{lemma}
\label{lem:ind4} 

(i) Let $\cG\subseteq 2^{[n]}$ be a family of sets such that any antichain $\cA \subset \cG$ has size at most 3. Then the number of pairs $(G,\cC)$ with $G \in \cG \cap \cC$ and $\cC \in \bC_n$ is at most $4n!$.

(ii) For any constant $c$ with $1/2<c<1$ there exists an integer $s_c$ such that if $s \ge s_c$ and $s\le c\binom{m^*_s}{\lceil m^*_s/2\rceil}$, then the following holds: if $\cG\subseteq 2^{[n]}$ is a family of sets such that any antichain $\cA \subset \cG$ has size less than $s$, then the number of pairs $(G,\cC)$ with $G \in \cG \cap \cC$ and $\cC \in \bC_n$ is at most $m^*_sn!$.

(iii) There exists an integer $s_0$ such that if $s\ge s_0$ and $\cG\subseteq 2^{[n]}$ is a family of sets such that any antichain $\cA \subset \cG$ has size at most $s$, then the number of pairs $(G,\cC)$ with $G \in \cG \cap \cC$ and $\cC \in \bC_k$ is at most $(m^*_s+1)n!$.
\end{lemma}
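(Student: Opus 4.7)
The key identity
\[
\sum_{G\in\cG}|G|!(n-|G|)! \;=\; n!\cdot \sum_{G\in\cG}\binom{n}{|G|}^{-1}
\]
reduces each of the three parts to a bound on the Lubell sum $\sum_{G\in\cG}\binom{n}{|G|}^{-1}$ by $4$, $m^*_s$, and $m^*_s+1$ respectively. Since each level $\cG_k:=\cG\cap\binom{[n]}{k}$ is itself an antichain in $\cG$, the hypothesis gives the pointwise bound $|\cG_k|\le s-1$ (with $s=4$ for (i)).

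My plan for parts (ii) and (iii) is to split the index range $\{0,1,\dots,n\}$ into $K:=\{k:\binom{n}{k}<s\}$ and its complement. By unimodality of binomial coefficients, $K$ is a union of an initial and a terminal interval symmetric around $n/2$; the very definition of $m^*_s$ forces $|K|\le m^*_s+1$ when $n\ge m^*_s$, and $|K|=n+1\le m^*_s$ when $n<m^*_s$. On $K$ the trivial bound $|\cG_k|\le\binom{n}{k}$ makes each term at most $1$, contributing at most $|K|$ in total. On $K^c$ we have $|\cG_k|\le s-1$ and $\binom{n}{k}\ge s$ growing rapidly as $k$ moves toward $n/2$; a geometric tail estimate bounds $(s-1)\sum_{k\in K^c}\binom{n}{k}^{-1}$ by a quantity strictly less than $2$ (the dominant contribution coming from the two boundary levels $k_0+1$ and $n-k_0-1$, where $k_0=\max(K\cap\{0,\dots,\lfloor n/2\rfloor\})$). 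For part (iii) this yields $m^*_s+1$ once $s\ge s_0$ is large enough to absorb parity and lower-order terms. For part (ii), the hypothesis $s\le c\binom{m^*_s}{\lceil m^*_s/2\rceil}$ with $c<1$ tightens each boundary term by a factor $c$, providing exactly the slack needed to round down to $m^*_s$.

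For part (i), where $s=4$ is too small for the level-wise argument to be tight, I instead invoke Dilworth's theorem: since $\cG$ contains no $4$-antichain, it partitions into three chains $\cK_1,\cK_2,\cK_3$. The Lubell sum of any chain in $2^{[n]}$ is at most $\sigma_n:=\sum_{k=0}^n\binom{n}{k}^{-1}$, and direct computation gives $\sigma_n\le 8/3$ for every $n$. The crucial additional ingredient is that each of $\emptyset$ and $[n]$ (if it lies in $\cG$) belongs to exactly one chain of the partition; chains avoiding $\emptyset$ contribute at most $\sigma_n-1$, and chains avoiding both extremes contribute at most $\sigma_n-2$. A short case analysis according to which chains contain $\emptyset$ and $[n]$ yields in all cases $\sum_{G\in\cG}\binom{n}{|G|}^{-1}\le 3\sigma_n-4\le 4$.

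The principal obstacle is nailing down the exact integer constants in (ii) and (iii): the qualitative approach (level-wise split $K,K^c$ with a geometric tail) is routine, but matching the precise bounds $m^*_s$ and $m^*_s+1$ requires careful handling of the two boundary terms and of the parity of $m^*_s$, and the hypothesis slack ($c<1$ in (ii), $s\ge s_0$ in (iii)) is exactly what allows the rounding to succeed. The Dilworth case analysis for part (i) is longer but entirely elementary.
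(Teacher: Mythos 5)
Your part (i) is correct and takes a genuinely different route from the paper: the paper simply observes that every level of $\cG$ other than $\emptyset$ and $[n]$ contains at most $3$ sets, bounds the Lubell sum by $2+3\sum_{i=1}^{n-1}\binom{n}{i}^{-1}$, and checks small $n$ plus monotonicity. Your Dilworth decomposition into three chains, refined by the observation that $\emptyset$ and $[n]$ each lie in at most one chain, gives $3\sigma_n-4\le 4$ cleanly and is arguably more transparent; note, though, that the assertion $\sigma_n\le 8/3$ for all $n$ still requires exactly the same kind of ``compute small cases, prove $\sigma_n$ is eventually decreasing'' verification that the paper performs for its sum, so the two arguments are comparable in length.

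For (ii) and (iii) there is a genuine gap, and it sits precisely where you flag the ``principal obstacle.'' Set $R(n)=\sum_{j=0}^n\min\{(s-1)/\binom{n}{j},\,1\}$, which is what both you and the paper must bound. Your bookkeeping ($|K|\le m^*_s+1$ plus a $K^c$-contribution below $2$) only yields $R(n)<m^*_s+3$, which is not the claimed bound; worse, the assertion that the $K^c$-sum is below $2$ is false in general (for $s=6$, $n=6$ it equals $5/6+1/3+1/4+1/3+5/6>2$), and the assertion that the hypothesis $s\le c\binom{m^*_s}{\lceil m^*_s/2\rceil}$ ``tightens each boundary term by a factor $c$'' misidentifies where the savings come from: the boundary levels of $K^c$ are exactly those where $\binom{n}{k}$ barely exceeds $s$, so their terms can be arbitrarily close to $1$ and contribute essentially no slack. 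The slack that improves the trivial bound $R(m^*_s)\le m^*_s+1$ to $R(m^*_s)<m^*_s$ comes from the middle: there are at least $2\sqrt{m^*_s}/\log m^*_s$ levels $j$ with $\binom{m^*_s}{j}=(1+o(1))\binom{m^*_s}{\lceil m^*_s/2\rceil}$, each contributing at most $(1+c)/2<1$, and since their number tends to infinity the total saving exceeds $1$ once $s\ge s_c$. This is the paper's argument, and nothing in your sketch replaces it. The second missing ingredient is the passage from $n=m^*_s$ to all larger $n$: a geometric-tail estimate does not handle $n$ just above $m^*_s$, where $R(n)$ can still equal $m^*_s+1$ (e.g.\ $R(5)=5$ while $m^*_6=4$). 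The paper disposes of all such $n$ at once by proving the monotonicity $R(n+1)\le R(n)$ for $n\ge m^*_s$, comparing the three middle summands of $R(n)$ with the four middle summands of $R(n+1)$; some argument of this kind is indispensable and is absent from your proposal. Part (iii) then follows from the same monotonicity combined with the trivial bound at $n=m^*_s$, so it inherits the same gap.
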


\begin{proof}
First we prove (i). We may assume that $\emptyset, [n] \in \cG$ holds as adding them will not result in violating the condition of the lemma and the number of pairs to be counted can only increase. These two sets are in $n!$ maximal chains each, thus giving $2n!$ pairs. Any other set $G$ belongs to $|G|!(n-|G|)!=\frac{n!}{\binom{n}{|G|}}$ chains in $\bC_n$. Sets of same size form an antichain, therefore for every $1 \le i \le n-1$ there exist at most 3 sets of size $i$ in $\cG$ and thus the total number of pairs $(G,\cC)$ is at most
\[
S(n)=2n!+3n!\sum_{i=1}^{n-1}\frac{1}{\binom{n}{i}}.
\]
For $n=2,3,4,5$ the sum $S(n)$ equals $3.5n!,4n!,4n!, 3.8n!$, respectively. Furthermore, it is an easy exercise to show that $\frac{S(n)}{n!}$ is monotone decreasing for $n\ge 5$ and therefore $\frac{S(n)}{n!} \le 4$ holds for all positive integers $n$. This completes the proof of (i).

\vskip 0.2truecm

Now we prove (ii). Clearly, as long as $n < m^*_s$ we can have $\cG=2^{[n]}$ and then the number of pairs is $(n+1)n!\le m^*_sn!$. When $n\ge m^*_s$ we again use the observation that for any $0 \le j\le n$ we have $|\{G\in \cG\cap \binom{[n]}{j}|<s$ and thus the number of pairs $(G,\cC)$ is at most $S(n)=\sum_{j=0}^n\min\{s-1,\binom{n}{j}\}j!(n-j)!$.
We need to show that $R(n):=\frac{S(n)}{n!}=\sum_{j=0}^n\min\{\frac{s-1}{\binom{n}{j}},1\}\le m^*_s$ holds for all $n\ge m^*_s$. Consider the case $n=m^*_s$. If $s$ is large enough (and thus $m^*_s$ and $n$), then $\binom{m^*_s}{\lceil m^*_s/2\rceil}=(1+o(1))\binom{m^*_s}{\lceil m^*_s/2\rceil +j}$ holds provided $|j|\le \sqrt{m^*_s}/\log m^*_s$. Therefore, by the assumption $s\le c\binom{m^*_s}{\lceil m^*_s/2\rceil}$ we have at least $2\sqrt{m^*_s}/\log m^*_s$ summands in $R(m^*_s)$ that are not more than $\frac{1+c}{2}$, a constant smaller than 1. Thus, if $m^*_s$ is large enough, their subsum 
$$\sum_{i=\lceil m^*_s/2\rceil-\sqrt{m^*_s}/\log m^*_s}^{\lceil m^*_s/2\rceil+\sqrt{m^*_s}/\log m^*_s}\frac{s-1}{\binom{m^*_s}{j}}$$ 
is less than $2\sqrt{m^*_s}/\log m^*_s-1$ and since all other summands are not more than 1, we obtain $R(m^*_s)<m^*_s$.

To finish the proof of (ii), we prove that if $n \ge m^*_s$ holds, then we have $R(n+1) \le R(n)$. First note that if $r_{n,j}$ denotes the $j$th summand in $R(n)$, then we have $r_{n,j}\ge r_{n+1,j}$ and $r_{n,n-j}\ge r_{n+1,n+1-j}$. Thus it is enough to show 
$$\sum_{i=-1}^1r_{n,\lceil n/2\rceil +i}\ge \sum_{i=-1}^2r_{n+1, \lceil n/2\rceil +i}.$$
By the definition of $m^*_s$, we know that $r_{n,\lceil n/2\rceil}<1$. Since $\binom{n}{\lceil n/2\rceil}=(1/2+o(1))\binom{n+1}{\lceil n/2\rceil}$ we have that the LHS is $(3+o(1))r_{n,\lceil n/2\rceil}$ while the RHS is $(4+o(1))r_{n,\lceil n/2\rceil}/2=(2+o(1))r_{n,\lceil n/2\rceil}$. This finishes the proof of (ii).

\vskip 0.2truecm

Finally, we prove (iii). Clearly, as long as $n \le m^*_s$ for any family $\cG\subseteq 2^{[n]}$ the number of pairs is $(n+1)n!\le (m^*_s+1)n!$. We need to show that $R(n)\le m^*_s+1$ holds for all $n> m^*_s$. As in (ii) the proof of $R(n+1) \le R(n)$ for $n \ge m^*_s$ did not require the assumption on $s$ and $c$, we obtain that $R(n)\le m^*_s+1$ holds for all $n$.
\end{proof}

Our last auxiliary lemma was proved by Griggs, Li and Lu \cite{GLL_dia}.

\begin{lemma} [Griggs, Li, Lu, during the proof of Theorem 2.5 in \cite{GLL_dia}]
\label{lem:nonindlem} Let $s \ge 2$, $r=t=1$ and thus $m_s := \lceil\log_2(s + 2)\rceil$. 

(1) If $s \in [2^{m_s - 1} -1,2^{m_s} - \binom{m_s}{\lceil \frac{m_s}{2}\rceil}-1]$, then if $\cG \subseteq 2^{[n]}$ is a $K_{1,s,1}$-free family of sets, then the number of pairs $(G,\cC)$ with $G \in \cG\cap \cC$ and $\cC \in \bC_n$ is at most $m_sn!$.

(2) If $s \in [2^{m_s} - \binom{m_s}{\lceil \frac{m_s}{2}\rceil},2^{m_s} -2]$, then if $\cG \subseteq 2^{[n]}$ is a $K_{1,s,1}$-free family of sets, then the number of pairs $(G,\cC)$ with $G \in \cG\cap \cC$ and $\cC \in \bC_n$ is at most $(m_s+1-\frac{2^{m_s}-s-1}{\binom{m_s}{\lceil \frac{m_s}{2}\rceil}})n!$. 

\end{lemma}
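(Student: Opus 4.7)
The plan is to apply the chain partition method, along the lines developed by Griggs, Li, and Lu for the diamond poset. The starting observation is that $\cG$ is $K_{1,s,1}$-free if and only if $|\cG \cap [A, B]| \le s+1$ for every $A, B \in \cG$ with $A \subseteq B$.

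I would partition the chains in $\bC_n$ by setting $A(\cC):=\min(\cG \cap \cC)$ and $B(\cC):=\max(\cG \cap \cC)$ for each $\cC$ meeting $\cG$, grouping them into blocks indexed by pairs $A \subseteq B$ in $\cG$. Writing $\mathrm{ch}^-(A)$ for the number of maximal chains from $\emptyset$ to $A$ that avoid $\cG\setminus\{A\}$, and $\mathrm{ch}^+(B)$ analogously, a chain in the $(A,B)$-block decomposes uniquely into three independent pieces: such a $\emptyset\to A$ chain, an arbitrary maximal chain from $A$ to $B$ inside $[A,B]$, and such a $B\to [n]$ chain. Summing $|\cG \cap \cC|$ over the $(|B|-|A|)!$ middle chains yields $(|B|-|A|)! \cdot L_{A,B}$, where
\[
L_{A,B}:=\sum_{H\in \cG \cap [A,B]} \frac{1}{\binom{|B|-|A|}{|H|-|A|}}
\]
is the Lubell function of $\cG \cap [A,B]$ inside $[A,B]\cong 2^{[N]}$ with $N=|B|-|A|$. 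Hence
\[
\sum_{\cC\in \bC_n} |\cG \cap \cC| \;=\; \sum_{\substack{A \subseteq B \\ A,B\in\cG}} \mathrm{ch}^-(A)\,\mathrm{ch}^+(B)\,(|B|-|A|)!\,L_{A,B},
\]
and since the coefficient in front of $L_{A,B}$ sums over $(A,B)$ to the number of chains in $\bC_n$ meeting $\cG$, which is at most $n!$, it suffices to bound $L_{A,B}$ uniformly by the constant claimed in the lemma.

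The crux is the following extremal Lubell problem inside $[A,B]$: maximize $\sum_{F\in\cF} 1/\binom{N}{|F|}$ over families $\cF \subseteq 2^{[N]}$ of size at most $s+1$ that contain both $\emptyset$ and $[N]$. The weight $1/\binom{N}{k}$ is smallest at the middle level and largest at the extremes, so the maximum is achieved by greedily including sets of highest weight and discarding middle-level sets. For $N<m_s$ the full lattice $2^{[N]}$ respects the size bound and contributes $N+1 \le m_s$. For $N=m_s$, case (1) gives $s+1 \le 2^{m_s}-\binom{m_s}{\lceil m_s/2\rceil}$, which permits discarding the entire middle level and gives Lubell value at most $(m_s+1)-1=m_s$; in case (2), we are forced to retain $(s+1)-(2^{m_s}-\binom{m_s}{\lceil m_s/2\rceil})$ middle-level sets, each of weight $1/\binom{m_s}{\lceil m_s/2\rceil}$, producing the value $m_s+1-\frac{2^{m_s}-s-1}{\binom{m_s}{\lceil m_s/2\rceil}}$. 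For $N>m_s$ a level-by-level greedy estimate (take outermost levels in full first, then a partial next level) shows that the optimum is strictly smaller than at $N=m_s$, since filling an outer level requires more sets while contributing the same mass $1$ per level.

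Combining the extremal Lubell bound with the displayed identity yields the desired bound on the number of pairs $(G,\cC)$. The main technical obstacle is the extremal Lubell analysis inside $[A,B]$: one has to verify that $N=m_s$ really is the worst case (requiring a careful comparison of the greedy optima across different $N$) and track correctly the boundary between the two regimes of $s$ corresponding to parts (1) and (2). The chain decomposition, double counting, and final assembly are otherwise routine.
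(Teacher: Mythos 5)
The paper does not prove this lemma at all: it is imported verbatim from Griggs, Li and Lu, so there is no in-paper proof to compare against. Your reconstruction is essentially the argument from the cited source: the min--max chain partition reduces the count to bounding, for each pair $A\subseteq B$ in $\cG$, the Lubell function of $\cG\cap[A,B]$ inside $[A,B]\cong 2^{[N]}$, and $K_{1,s,1}$-freeness correctly translates into $|\cG\cap[A,B]|\le s+1$ (non-induced containment does not require the middle sets to be an antichain, so cardinality is the only constraint). Your identity $\sum_{\cC}|\cG\cap\cC|=\sum_{A\subseteq B}|\bC_{A,B}|\,L_{A,B}$ and the greedy computation at $N=m_s$, including the case split at $2^{m_s}-\binom{m_s}{\lceil m_s/2\rceil}$, reproduce the stated constants exactly.

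The one place where your write-up falls short of a proof is the claim that $N=m_s$ is the worst case. Your justification ("filling an outer level requires more sets while contributing the same mass") suggests that the decreasing rearrangement of the weights $1/\binom{N+1}{k}$ is pointwise dominated by that of the weights $1/\binom{N}{k}$, but this is false: for example, with $N=10$ the positions $i\in(112,134]$ carry weight $1/\binom{10}{3}=1/120$ in the $10$-cube but weight $1/\binom{11}{2}=1/55$ in the $11$-cube. What is true, and what you actually need, is domination of the \emph{partial sums}, i.e.\ $g(N+1,M)\le g(N,M)$ where $g(N,M)$ is the maximum Lubell mass of $M$ sets in $2^{[N]}$. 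This does hold and can be proved by comparing the two greedy profiles at the breakpoints $M=2\sum_{k\le j}\binom{N+1}{k}$, where the $(N+1)$-profile equals $2(j+1)$ while the $N$-profile has already completed at least the same levels, and then using concavity of the profiles between breakpoints; this is the same kind of monotonicity argument the paper carries out explicitly for its antichain-constrained analogue ($R(n+1)\le R(n)$ in Lemma~\ref{lem:ind4}). With that step supplied, your proof is complete and matches the source's method.
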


\section{Proofs}
\label{sec:pr}
In this section we prove our main theorems. Let us start with constructions to see the lower bounds. 
We partition $\binom{[n]}{k}$ into $n$ classes: $\cF_{n,k,i}=\{F\in \binom{[n]}{k}: \sum_{j\in F}j\equiv i \hskip 0.2truecm (\mod n)\}$ and denote the union of the $r$ largest classes by $\binom{[n]}{k}_{r, mod}$. Clearly, $|\binom{[n]}{k}_{r, mod}|\ge \frac{r}{n}\binom{n}{k}$. Furthermore, it has the property that for any distinct $r+1$ sets $F_1,F_2,\dots, F_{r+1}\in \binom{[n]}{k}_{r, mod}$ we have $|\cap_{i=1}^{r+1}F_i|\le k-2$ and $|\cup_{i=1}^{r+1}F_i|\ge k+2$.

\begin{itemize}
\item
For \tref{twopart} consider the family $\cF:=\binom{[n]}{\lceil n/2\rceil -2}_{r-1,mod}\cup\binom{[n]}{\lceil n/2\rceil -1}\cup \binom{[n]}{\lceil n/2\rceil} \cup \binom{[n]}{\lceil n/2\rceil +1}_{s-1,mod}$. Suppose $A_1,A_2,\dots,A_r, B_1,B_2, \dots,B_s\in\cF$ form an induced copy of $K_{r,t}$. Then $\cup_{i=1}^rA_i\subseteq \cap_{j=1}^sB_j$ holds, but by the above property of $\binom{[n]}{k}_{r, mod}$ and the inducedness we have $|\cup_{i=1}^rA_i|\ge \lceil n/2\rceil$ and  $|\cap_{j=1}^sB_j|\le \lceil n/2\rceil-1$ - a contradiction.
\item
For \tref{threepartnonind} let $k$ be the index of the level below the $m_s+f(r,t)$ middle levels, i.e., $k=\lceil\frac{n-m_s-f(r,t)}{2}\rceil-1$. Write $l=k+m_s+f(r,t)+1$ and let us consider the family 
$$\cF:=\binom{[n]}{k}_{(r-2)^+,mod} \cup \bigcup_{i=1}^{m_s+f(r,t)}\binom{[n]}{k+i} \cup \binom{[n]}{l}_{(t-2)^+,mod}.$$ We claim that $\cF$ is $K_{r,s,t}$-free. Assume not and let $A_1,A_2,\dots,A_r, B_1,B_2, \dots,B_s$, $C_1,C_2,\dots,C_t\in\cF$ form a copy of $K_{r,s,t}$. If $r\ge 2$, then $|\cup_{i=1}^rA_i|\ge k+2$ and if $r=1$, then $|A_1|\ge k+1$ (note that if $r=1,2$, then $(r-2)^+=0$ and thus the smallest set size in $\cF$ is $k+1$). Similarly, if $t\ge 2$, then $|\cap_{j=1}^tC_j|\le l-2$ and if $t=1$, then $|C_1|\le l-1$. In any case, $|\cap_{t=1}^tC_j|-|\cup_{i=1}^rA_i| \le m_s-1$ and thus there is no place for $B_1,B_2,\dots,B_s$ - a contradiction.
\item
The construction showing the lower bound of \tref{rest} is a special case of the one for \tref{threepartnonind}.
\item
For \tref{threepartind} (i), (ii) and (iii), let $k$ be the index of the level below the $m^*_s+f(r,t)$ middle levels, i.e., $k=\lceil\frac{n-m^*_s-f(r,t)}{2}\rceil-1$. Write $l=k+m^*_s+f(r,t)+1$ and let us consider the family 
$$\cF:=\binom{[n]}{k}_{r-1,mod} \cup \bigcup_{i=1}^{m^*_s+f(r,t)}\binom{[n]}{k+i} \cup \binom{[n]}{l}_{t-1,mod}.$$
One can see that for any antichains $A_1,A_2,\dots, A_r\in\cF$ and $C_1,C_2,\dots, C_t\in\cF$ we have $|\cap_{i=1}^tC_i|-|\cup_{j=1}^rA_j|\le m^*_s-1$ and thus there is no room for an antichain of size $s$ in between. Note that when $s=4$, then $m^*_s=4$ as $\binom{4}{2}=6\ge 4$, but $\binom{3}{2}=3<4$.
\end{itemize}

\vskip 0.3truecm

Let us now start proving the upper bounds of our results. First of all, from here on every family $\cF\subseteq 2^{[n]}$ contains sets only of size from the interval $[n/2-n^{2/3},n/2+n^{2/3}]$. This leaves all our proofs valid as by Chernoff's inequality $|\{F\subseteq [n]: ||F|-n/2|\ge n^{2/3}\}|\le 2e^{-2n^{1/3}}=o(\frac{1}{n^2}\binom{n}{\lceil n/2\rceil})$. 

As we mentioned in the Introduction, for all proofs we will use the chain partition method. This works in the following way: for a family $\cF\subseteq 2^{[n]}$ suppose we can partition $\bC_n$ into $\bC_{n,1},\bC_{n,2},\dots\bC_{n,l}$ such that for all $1\le i \le l$ the number of pairs $(F,\cC)$ with $F\in \cF\cap\cC$ and $\cC\in \bC_{n,i}$ is at most $b|\bC_{n,i}|$. Then clearly the number of pairs  $(F,\cC)$ with $F\in \cF\cap\cC$ and $\cC\in \bC_{n}$ is at most $b|\bC_{n}|$. Since the number of such pairs is exactly $\sum_{F\in\cF}|F|!(n-|F|)!$ we obtain the LYM-type inequality
\[
\sum_{F\in\cF} \frac{1}{\binom{n}{|F|}}\le b
\]
and thus $|\cF|\le b\binom{n}{\lceil n/2\rceil}$ holds. Therefore, in the proofs below we will end our reasoning whenever we reach a bound on the appropriate partition as mentioned above.

\begin{proof}[Proof of the upper bound in \tref{twopart}] Let $\cF$ be an induced $K_{r,t}$-free family. We can assume that $\cF$ contains an antichain of size at least $r$ as otherwise $\cF$ could contain at most $r-1$ sets of the same size and thus we would obtain $|\cF|\le (r-1)(n+1)$.

We will use the $\min_r$-partition $\{\bC_{A,r}: s^-_{\cF}(A)\ge r\}$.

We claim that the number of pairs $(F,\cC)$ with $F\in \cF\cap \cC$ and $\cC\in \bC_{A,r}$ is at most $(2+\frac{2(r+t-2)}{n}+o(\frac{1}{n}))|\bC_{A,r}|$ for any $A$ with $s^-_{\cF}(A)\ge r$. Note that as $\cF$ is induced $K_{r,t}$-free, for any such $A$ we have $s^+_{\cF}(A)<t$. We distinguish three types of pairs:
\begin{enumerate}
\item
if $A \in \cF$, then there are exactly $|\bC_{A,r}|$ pairs with $F=A$,
\item
any chain in $\bC_{A,r,-}$ can be extended to $(n-|A|)!$ chains in $\bC_{A,r}$, thus by \cref{plus} (i) there are $(1+\frac{2(r-1)}{n}+o(\frac{1}{n}))|\bC_{A,r}|$ pairs with $F \subsetneq A$,
\item
finally, any maximal chain from $A$ to $[n]$ can be extended to $|\bC_{A,r,-}|$ chains in $\bC_{A,r}$, thus \lref{noincomp} (ii) implies that there are $(\frac{2(t-1)}{n}+o(\frac{1}{n}))|\bC_{A,r}|$ pairs with $A \subsetneq F$,
\end{enumerate}
This gives us a total of at most $(2+\frac{2(r+t-2)}{n}+o(\frac{1}{n}))|\bC_{A,r}|$ pairs, which completes the proof.
\end{proof}

\vskip 0.3truecm

Now we turn our attention to complete three level posets.

\begin{proof}[Proof of the upper bound in \tref{threepartnonind}] Let $\cF$ be a $K_{r,s,t}$-free family. We can assume that $\cF$ contains an antichain of size at least $z:=\max\{r,t\}$ as otherwise $\cF$ could contain at most $z-1$ sets of the same size and thus we would obtain $|\cF|\le (z-1)(n+1)$.

We consider the $\min_r-\max_t$-partition of $\bC_n$ and we claim that the number of pairs $(F,\cC)$ with $F\in \cF\cap \cC$ and $\cC \in \bC_S$, $\cC\in \bC_{A,B}$ is at most $b|\bC_S|$, $b|\bC_{A,B}|$, respectively, where $b=(m_s+f(r,t)+\frac{2(r+t-2)}{n}+o(\frac{1}{n}))$ when we prove (1) and $b=(m_s+f(r,t)+1- \frac{2^{m_s}-s+f(r,t)-1}{\binom{m}{\lceil \frac{m_s}{2}\rceil}})$ when we prove (2). 

First consider the ``degenerate" case of $\bC_S$ with $S\in \cS^-$. A chain $\cC \in \bC_{S}$ goes from $\emptyset$ until one of the subsets $S_1,S_2,\dots, S_k$ of $S$ with size $|S|-1$ for which $s^-_{\cF}(S_i)<r$, then $\cC$ must go through $S$, and finally $\cC$ must contain a maximal chain from $S$ to $[n]$. Thus $|\bC_{S}|=k(|S|-1)!(n-|S|)!$. We distinguish two types of pairs to count.
\begin{enumerate}
\item
If $r\ge 2$, then applying \cref{plus} (i) we obtain that there are at most $(1+\frac{2(r-1)}{n}+o(\frac{1}{n}))|\bC_{S}|$ pairs $(F,\cC)$ with $F\subsetneq S$. Together with $\{(S,\cC):\cC \in \bC_S\}$ we have $(2+\frac{2(r-1)}{n}+o(\frac{1}{n}))|\bC_{S}|$ pairs. If $r=1$, then by definition the number of pairs $(F,\cC)$ with $F\subseteq S$ is at most $|\bC_S|$ as for all such pairs we must have $F=S$.
\item
Applying \lref{noincomp} (ii) we obtain that there are at most $(\frac{2(t-1)}{n}+o(\frac{1}{n}))|\bC_{S}|$ pairs $(F,\cC)$ with $S\subsetneq F$.
\end{enumerate}
This gives a total of at most $(2+\frac{2(r+t-2)}{n}+o(\frac{1}{n}))|\bC_{S}|$ pairs.

We now consider the ``more natural" $A \in \cS^+$, $A \subseteq B$ case. As there are sets in the interval $[A,B]$, this time we distinguish three types of pairs:
\begin{enumerate}
\item
If $r=1$, then there is no pair $(F,\cC)$ with $F\subsetneq A$. If $r\ge 2$, then applying \cref{plus} (i) we obtain that there are at most $(1+\frac{2(r-1)}{n}+o(\frac{1}{n}))|\bC_{A,B}|$ pairs $(F,\cC)$ with $F\subsetneq A$.
\item
If $t=1$, then there is no pair $(F,\cC)$ with $B\subsetneq F$. If $t\ge 2$, then applying \cref{plus} (ii) we obtain that there are at most $(1+\frac{2(t-1)}{n}+o(\frac{1}{n}))|\bC_{A,B}|$ pairs $(F,\cC)$ with $B\subsetneq F$.
\item
If $\cF$ is a $K_{r,s,t}$-free family, then $\{F\in \cF: A\subseteq F \subseteq B\}$ is a $K_{1,s-f(r,t),1}$-free family. Indeed, if $f(r,t)= 2$, then $|\{F\in \cF: A\subseteq F \subseteq B\}|\le s$ as these sets together with the sets of the antichain of size $r$ below $A$ and the sets of the antichain of size $t$ above $B$ would form a copy of $K_{r,s,t}$ in $\cF$. If $f(r,t)=1$, say $r=1$, then by the definition of the $\min_1-\max_t$-partition, we have $A \in \cF$ and thus $|\{F\in \cF: A\subsetneq F \subseteq B\}|\le s$, in particular together with $A$ they are $K_{1,s-1,1}$-free. If $f(r,t)=0$, then the $K_{1,s-f(r,t),1}$-free property is the same as the $K_{1,s,1}$-free property which is possessed by $\{F\in \cF: A\subseteq F \subseteq B\}$ as it is a subfamily of $\cF$.

By \lref{nonindlem}, in case (1) of \tref{threepartnonind} the number of pairs $(F,\cC)$ with $A\subseteq F\subseteq B$ is at most $m_s|\bC_{A,B}|$, while in case (2)  of \tref{threepartnonind} the number of pairs $(F,\cC)$ with $A\subseteq F\subseteq B$ is at most $(m_s+1- \frac{2^{m_s}-s+f(r,t)-1}{\binom{m_s}{\lceil m_s/2\rceil}})|\bC_{A,B}|$.
\end{enumerate}
Adding up the number of three types of pairs we obtain that the total number of pairs is not more than $(m_s+f(r,t)+\frac{2(r+t-2)}{n}+o(\frac{1}{n}))|\bC_{A,B}|$ and $(m_s+1+f(r,t)- \frac{2^{m_s}-s+f(r,t)-1}{\binom{m_s}{\lceil m_s/2\rceil}}+\frac{2(r+t-2)}{n}+o(\frac{1}{n}))|\bC_{A,B}|$ in the two respective cases of \tref{threepartnonind}. 
\end{proof}

\vskip 0.3truecm

We continue with the proof of \tref{rest}.

\begin{proof}[Proof of \tref{rest}]
Let $\cF$ be a $K_{r,2,t}$-free family and let us write $r^{++}=\max\{r,2\}, t^{++}=\max\{t,2\}$. We consider the $\min_{r^{++}}-\max_{t^{++}}$-partition of $\bC_n$. Just as in the proof of \tref{threepartnonind}, we obtain that if $S\in \cS^-$, then the number of pairs $(F,\cC)$ with $F\in\cF\cap \cC$ and $\cC\in \bC_{S}$ is at most $(2+O(\frac{1}{n}))|\bC_{S}|$. Note that if $A \subseteq B$, then $|\cF\cap\{G\in 2^{[n]}:A\subseteq G\subseteq B\}|\le 1$ as by definition of the $\min_{r^{++}}-\max_{t^{++}}$-partition two such sets would make $\cF$ contain a copy of $K_{r,2,t}$. 
\begin{itemize}
\item
Applying \cref{plus} (i) we obtain that there are at most $(1+\frac{2(r^{++}-1)}{n}+o(\frac{1}{n}))|\bC_{A,B}|$ pairs $(F,\cC)$ with $F\subsetneq A$.
\item
Applying \cref{plus} (ii) we obtain that there are at most $(1+\frac{2(t^{++}-1)}{n}+o(\frac{1}{n}))|\bC_{A,B}|$ pairs $(F,\cC)$ with $B\subsetneq F$.
\item
By the observation above, the number of pairs $(F,\cC)$ with $A \subseteq F\subseteq B$ is at most $|\bC_{A,B}|$.
\end{itemize}
\end{proof}

\vskip 0.3truecm

\begin{proof}[Proof of \tref{threepartind}]
Throughout the proof we will assume that all $s_i$'s are at least 2. This will be needed for the fact that all $m^*_{s_i}$'s are larger than 1.

First we prove (i), (ii), and (iii). Let $\cF$ be an induced $K_{r,s,t}$-free family. We can assume that $\cF$ contains an antichain of size at least $z:=\max\{r,t\}$ as otherwise $\cF$ could contain at most $z-1$ sets of the same size and thus we would obtain $|\cF|\le (z-1)(n+1)$.
 We again consider the $\min_r-\max_t$-partition of $\bC_n$ and count the number of pairs $(F,\cC)$ with $F\in \cF\cap \cC$ and $\cC\in \bC_n$.

The degenerate case is identical to what we had in the proof of \tref{threepartnonind}, thus we only consider the case when $A \in \cS^+$, $A \subseteq B$. The three types of pairs:
\begin{enumerate}
\item
If $r=1$, then there is no pair $(F,\cC)$ with $F\subsetneq A$. If $r\ge 2$, then applying \cref{plus} (i) we obtain that there are at most $(1+\frac{2(r-1)}{n}+o(\frac{1}{n}))|\bC_{A,B}|$ pairs $(F,\cC)$ with $F\subsetneq A$.
\item
If $t=1$, then there is no pair $(F,\cC)$ with $B\subsetneq F$. If $t\ge 2$, then applying \cref{plus} (ii) we obtain that there are at most $(1+\frac{2(t-1)}{n}+o(\frac{1}{n}))|\bC_{A,B}|$ pairs $(F,\cC)$ with $B\subsetneq F$.
\item
Note that $\{F\in \cF: A\subseteq F\subseteq B\}$ cannot contain an antichain of size $s$ as otherwise $\cF$ would contain an induced copy of $K_{r,s,t}$.
\begin{enumerate}
\item
If $\cF$ is an induced $K_{r,4,t}$-free family, then by \lref{ind4} (i) the number of pairs $(F,\cC)$ with $A\subseteq F\subseteq B$ is at most $4|\bC_{A,B}|$.
\item
If $\cF$ is an induced $K_{r,s,t}$-free family with $s\le c\binom{m^*_s}{\lceil m^*_s/2\rceil}$ and $s$ large enough, then by \lref{ind4} (ii) the number of pairs $(F,\cC)$ with $A\subseteq F\subseteq B$ is at most $m^*_s|\bC_{A,B}|$.
\item
If $\cF$ is an induced $K_{r,s,t}$-free family with $s$ large enough, then by \lref{ind4} (iii) the number of pairs $(F,\cC)$ with $A\subseteq F\subseteq B$ is at most $(m^*_s+1)|\bC_{A,B}|$.
\end{enumerate}
\end{enumerate}
Altogether these bounds yield that the total number of pairs is at most 
\begin{enumerate}
\item
$(4+f(r,t)+\frac{2(r+t-2)}{n}+o(\frac{1}{n}))|\bC_n|$ if $\cF$ is induced $K_{r,4,t}$-free.
\item
$(m^*_s+f(r,t)+\frac{2(r+t-2)}{n}+o(\frac{1}{n}))|\bC_n|$ if $\cF$ is induced $K_{r,s,t}$-free, $s\le c\binom{m^*_s}{\lceil m^*_s/2\rceil}$ and $s$ large enough.
\item
$(m^*_s+1+f(r,t)+\frac{2(r+t-2)}{n}+o(\frac{1}{n}))|\bC_n|$ if $\cF$ is induced $K_{r,s,t}$-free and $s$ large enough.
\end{enumerate}

Now we prove (iv). Let $\cF$ be an induced $K_{r,s_1,s_2,\dots,s_j,t}$-free family.  We can assume that $\cF$ contains an antichain of size at least $z:=\max\{r,t\}$ as otherwise $\cF$ could contain at most $z-1$ sets of the same size and thus we would obtain $|\cF|\le (z-1)(n+1)$. Before proceeding with the formal proof, let us briefly summarize the ideas of the partition of $\bC_n$ that we are going to use. Just as in the case of the $\min_r-\max_t$-partition we try to assign markers $A_0,A_1,\dots,A_j$ to every chain $\cC\in \bC_n$ with the following properties: (a) $A_0$ is the smallest set in $\cC$ with $s^-_{\cF}(A_0)\ge r$ and (b) for every $1\le i\le j$ $A_i$ is the smallest set in $\cC$ \textit{above $A_{i-1}$} such that $[A_{i-1},A_i]$ contains an antichain of size $s_i$. This definition enables us to build the $i$th middle level of $K_{r,s_1,\dots,s_j,t}$ between $A_{i-1}$ and $A_i$ for all $i$ with $1\le i\le j$ and thus we obtain that $s^+_{\cF}(A_j)<t$ must hold. If we were able to define all those markers, then we could apply our lemmas from \sref{lemmas} to bound the number of pairs $(F,\cC)$ with $F\in \cF\cap \cC$, $\cC\in\bC_n$ in the different intervals $[A_i,A_{i+1}]$. Unfortunately, it might happen that not all markers can be defined. However we will index the parts of the partition of $\bC_n$ by chains of length at most $j+1$. Instead of giving formal definitions of the $\bC_{A_0,\dots, A_i}$'s and then verifying that they indeed form a partition of $\bC_n$, we consider an arbitrary maximal chain $\cC\in \bC_n$ and describe the procedure how to define its markers.

\begin{itemize}
\item
If $r=1$, then $A_0$ is the smallest set in $\cF \cap \cC$,
\item
if $r\ge 2$, then $A_0$ is the smallest set in $\cC$ with $s^-_{\cF}(A_0)\ge r$.
\end{itemize}
Note that by the assumption $s^-_{\cF}([n])\ge \max\{r,t\}$ the marker $A_0$ is defined for all chains $\cC\in\bC_n$. Let us now assume that $A_{i-1}$ has been defined for some $1\le i\le j$. If $s^+_{\cF}(A_{i-1})<s_i$, then our procedure is finished and $\cC$ belongs to $\bC_{A_0,A_1,\dots,A_{i-1}}$. If $s^+_{\cF}(A_{i-1})\ge s_i$ holds, then

\begin{itemize}
\item
$A_i$ is the smallest set in $\cC$ such that $[A_{i-1},A_i]$ contains an antichain of size $s_i$.
\end{itemize}
Note that if the procedure does not stop at $A_{i-1}$, then $A_i$ exists as $[n]\in \cC$ and $s_i\le s^+_{\cF}(A_{i-1})$.

Observe that a chain $\cC$ in $\bC_{A_0,\dots,A_i}$ contains all $A_k$'s and for every $0\le k\le i$ it goes through one of the $(|A_k|-1)$-subsets $A^k_1,\dots A^k_{l_k}$ of $A_k$ for which $[A_{k-1},A^k_l]$ does not contain an antichain of size $s_k$ where $A_{-1}=\emptyset$ and $s_{0}=r$. 

We now count the pairs $(F,\cC)$ with $F\in \cF\cap\cC$ and $\cC \in \bC_{A_0,\dots,A_i}$. 
\begin{itemize}
\item
Pairs with $F\subsetneq A_0$. If $r=1$, then there is no such pair by definition of $A_0$, otherwise we can apply \cref{plus} (i) to $A_0$ to obtain that the number of such pairs is at most $(1+\frac{2(r-1)}{n}+o(\frac{1}{n}))|\bC_{A_,\dots, A_j}|$.
\item
Pairs with $A_0 \subseteq F\subsetneq A_{i}$. For any $1\le k \le i$ one can apply \lref{ind4} (ii) to $A_{k-1}$ and all $A^k_1,\dots, A^k_{l_k}$ to obtain that the number of pairs with $F\in [A_{k-1},A^k_l]$ for some $1\le l\le l_k$ is at most $m^*_{s_k}|\bC_{A_0,\dots,A_i}|$.
\item
Pairs with $F\supseteq A_i$. 
\begin{itemize}
\item
If $i<j$, then by definition of how we declared our process finished, we obtain $s^+_{\cF}(A_i)<s_{i+1}$.
Thus we can apply \lref{noincomp} (ii) to obtain that the number of such pairs is at most $(1+\frac{2(s_{i+1}-1)}{n}+o(\frac{1}{n}))|\bC_{A_,\dots, A_i}|$.
\item
If $i=j$ and $t=1$, then by definition of $A_j$ there is no such pair. 
\item
If $i=j$ and $t>1$, then as $\cF$ is induced $K_{r,s_1,\dots,s_j,t}$-free, we obtain that $s^+_{\cF}(A_j)<t$. Thus we can apply \lref{noincomp} (ii) to obtain that the number of such pairs is at most $(1+\frac{2(t-1)}{n}+o(\frac{1}{n}))|\bC_{A_,\dots, A_j}|$.
\end{itemize}
\end{itemize}

Adding up these bounds we obtain that if $i=j$, then the total number of pairs is at most $(f(r,t)+\sum_{k=1}^jm^*_{s_k}+O(\frac{1}{n}))|\bC_{A_0,\dots,A_j}|$. If $i<j$ holds the upper bound we obtain is $(f(r,t)+1+\sum_{k=1}^im^*_{s_k}+O(\frac{1}{n}))|\bC_{A_0,\dots,A_j}|$. But since $s_j>1$ holds, we have $(f(r,t)+1+\sum_{k=1}^im^*_{s_k}+O(\frac{1}{n}))|\bC_{A_0,\dots,A_j}|\le (f(r,t)+\sum_{k=1}^jm^*_{s_k}+O(\frac{1}{n}))|\bC_{A_0,\dots,A_j}|$.
\end{proof}

\vskip 0.3truecm

\noindent\textbf{Acknowledgment.} I would like to thank an anonymous referee for his many helpful remarks to improve the presentation of the paper.

\end{document}